\newcommand{\setword}[2]{ 
	\phantomsection
	#1\def\@currentlabel{\unexpanded{#1}}\label{#2} 
}
   \definecolor{spoc}{RGB}{180, 90, 20}
 \definecolor{spic}{RGB}{27, 121, 0}
\newtheorem{thm}{Theorem}[section]
\newtheorem{lem}[thm]{Lemma}
\newtheorem{rem}[thm]{Remark}
\newtheorem{rems}[thm]{Remarks}
\newtheorem{prop}[thm]{Proposition}
\newtheorem{df}[thm]{Definition}
\newtheorem{dfs}[thm]{Definitions}
\newtheorem{ex}[thm]{Example}
\newtheorem{cex}[thm]{Counter-example}
\newtheorem{exs}[thm]{Examples}
\newtheorem{symbs}[thm]{Notations}
\newcommand{\1}{\mathbbm{1}}
\newcommand{\wt}{\widetilde}
\newcommand{\wb}{\widebar}
\newcommand{\vS}{\varSigma}
 \newcommand{\vO}{\varOmega}
\newcommand{\vT}{\varTheta}
\newcommand{\ov}{\overline}
\newcommand{\mf}{\mathfrak}
\newcommand{\uph}{\upharpoonright}
\newcommand{\restr}{\upharpoonright}
\newcommand{\s}{\sigma}
\newcommand{\sq}{\subseteq}
\def\Q{{\mathbb Q}}
\newcommand{\B}{\mathfrak B}
\newcommand{\N}{\mathbb N}
\newcommand{\R}{\mathbb R}
\newcommand{\T}{\mathbb T}
\newcommand{\E}{\mathbb E}
\newcommand{\A}{\mathcal A}
\newcommand{\F}{\mathcal F}
\newcommand{\G}{\mathcal G}
\newcommand{\M}{\mathcal M}
\def\H{{\mathcal H}}
\newcommand{\al}{\alpha}
 \newcommand{\ga}{\gamma}
\newcommand{\be}{\beta}
\newcommand{\ka}{\kappa}
\newcommand{\ttheta}{\rho(\theta)}
\newcommand{\Ttheta}{\rho(\vT)}
\newcommand{\Qtd}{\{Q_{\theta}\}_{\theta\in D}}
\title[A characterization of  PEMM converting a CMRP into a CMPP with applications in RT]{A characterization of equivalent martingale  measures converting a compound mixed renewal process into a compound mixed Poisson one  with applications in Risk Theory}
\subjclass[2020]{Primary 91G05, 60G55 ; secondary 28A35, 60A10, 60G44, 60K05.} 
\keywords{Compound mixed renewal process, Change of measures, Martingales, Premium calculation principle, Ruin probability}
\author[S.~M. Tzaninis]{Spyridon M. Tzaninis}
\address{Department of Statistics and Insurance Science\\ University of
	Piraeus\\ 80 Karaoli and Dimitriou str.\\ 185 34 Piraeus\\ Greece}
\email{stzaninis@unipi.gr; macheras@unipi.gr}
\thanks{}
\author[N.~D. Macheras]{Nikolaos D. Macheras}
\date{\today}
\begin{document}
 
\begin{abstract}
 Given  a compound mixed renewal process $S$  under a probability measure $P$, we provide a characterization of all  equivalent martingale probability measures $Q$ on the domain of $P$, that convert $S$ into a compound mixed Poisson process.  This result  extends earlier works of Delbaen \& Haezendonck \cite{dh}, Lyberopoulos \& Macheras \cite{lm3} and  the authors \cite{mt3}, and enables us to find a wide class of price processes satisfying the condition of no free lunch with vanishing risk. Implications to the ruin problem and to the computation of premium calculation principles in an arbitrage free insurance market are also discussed.
 
\end{abstract}
\maketitle

\section{Introduction}\label{intro}
Given a price process $U_\mathbb{T}:=\{U_t\}_{t\in\mathbb{T}}$, where $ \mathbb{T}:=[0,T]$, $T>0$,  on a probability space $(\vO,\vS,P)$, a basic method in Mathematical Finance is to replace the initial probability measure $P$ by an equivalent one $Q$, which converts  $U_\mathbb{T}$  into a martingale with respect to $Q$. The new probability measure, often called {\em risk-neutral} or {\em equivalent martingale measure} (written EMM for short), is then used for pricing and hedging contingent claims (e.g., options, futures, etc.).  Note that such a method for pricing contingent claims is originated from the field of Actuarial Science (see Delbaen \& Schachermayer \cite{ds}, pages 149-150 for more details).   However, in contrast to the situation of the classical Black-Scholes option pricing formula, where the EMM is unique, in Actuarial Mathematics that  is certainly not the case, as the insurance market is not, in general, complete (see e.g., Sondermann \cite{so}, Section 4). Thus, if $U_\mathbb{T}$ represents the liabilities of an insurance company, then there exist infinitely many equivalent martingale measures for $U_\mathbb{T}$, so that pricing is directly linked with an attitude towards risk, see \cite{dh},  pages 269-270, for more details. The latter led Delbaen \& Haezendonck \cite{dh} to the problem of characterizing all those EMMs $Q$ which preserve the structure of a given compound Poisson process under $P$, see \cite{dh}, Proposition 2.2.  The work of Delbaen \& Haezendonck played a key role in  understanding the interplay between financial and actuarial pricing of insurance (see  Embrechts \cite{em} for an overview), and has influenced the studies of many researchers (see  \cite{mt3},  page 44, and the references therein for more details).   Nevertheless,  such a characterization of  EMMs for $U_\mathbb{T}$ does not always provide a viable pricing system in  actuarial practice, since it is not appropriate for describing inhomogeneous risk portfolios. For this reason the work of Delbaen \& Haezendonck \cite{dh} was generalized by Embrechts \& Meister \cite{emme} and Lyberopoulos \& Macheras \cite{lm3, lm3aap} to mixed Poisson risk models.  However, since the (mixed) Poisson risk model presents some serious deficiencies as far as practical models are considered (see \cite{mt3},  page 44, and \cite{mt2}, page 226, and the references therein),   it seems reasonable to investigate the existence of EMMs for the price process $U_\mathbb{T}$ in the more   general mixed renewal risk model.  \smallskip

In \cite{mt2}, Corollary  4.8, a characterization of all progressively equivalent probability measures that convert a  compound MRP into a compound mixed Poisson one (written MPP for short) was proven.  In Section \ref{CRPM}, relying on the above result, we provide   a characterization of all progressively EMMs $Q$ for a canonical price process that convert a  compound MRP under $P$ into a  compound  MPP under $Q$, see Theorem \ref{class}. This theorem generalizes  corresponding results of \cite{lm3},  Proposition  5.1(ii), and \cite{mt3},  Proposition 4.2.   A first consequence of Theorem \ref{class} is Theorem \ref{ftap}, where we find out a wide class of \textit{canonical} price processes satisfying the condition of no free lunch with vanishing risk (written (NFLVR) for short), connecting in this way our results with this basic notion of Mathematical Finance.
\smallskip

An implication of Theorem \ref{class} to the ruin problem is discussed in Section \ref{rp}, where for a given reserve process  $R^u(\vT)$ we characterize all those progressively equivalent to $P$ probability measures $Q$  that covert  a $P$-compound MRP into a $Q$-compound MPP in such a way that ruin occurs $Q$-a.s., see Theorem \ref{ruin1a}.   In Section \ref{ap}, we discuss some implications of our results to the pricing of actuarial risks (premium calculation principles)  in an arbitrage free insurance market.   Finally, in Section \ref{ex} we present some  concrete  examples demonstrating how to construct mixed premium calculation principles (see Section \ref{ap} for the definition) in   an insurance market possessing the property of (NFLVR) and how  to obtain   explicit formulas for  their corresponding ruin   probabilities.

\section{Preliminaries}\label{prm}
$\N$, $\Q$ and $\R$ stand for the natural, the  rational and the real numbers, respectively, while  $\N_0:=\N\cup\{0\}$, $\ov{\N}_0:=\N_0\cup\{\infty\}$, $\ov\R:=\R\cup\{-\infty,+\infty\}$, $\Q_+:=\{x\in\Q: x\geq{0}\}$ and  $\R_+:=\{x\in\R: x\geq{0}\}$. If $d\in\N$ then $\R^d$ denotes the Euclidean space of dimension $d$. For a map $f:A\rightarrow{E}$ and for a non-empty set $B\sq{A}$ write $f{\restr}{B}$ for the restriction of $f$ to $B$  and $\1_B$ for the indicator function of the set $B$.\smallskip

{\em Throughout this paper, unless stated otherwise, $(\vO,\vS,P)$ is a fixed but arbitrary probability space and $\vT:\vO\to D\subseteq\R^d$ ($d\in\N$) is a $d$-dimensional random vector}.  By  $\mathcal{L}^\ell(P)$ we denote the family of all $\vS$-measurable real-valued  functions $f$ on $\vO$ such that $\int|f|^\ell\;dP<\infty$ ($\ell\in\{1,2\}$).  For any Hausdorff topology $\mathfrak{T}$ over $\vO$, by $\B(\vO)$ is denoted the Borel $\s$-algebra on $\vO$, i.e., the $\s$-algebra generated by $\mathfrak{T}$, while $\mf{B}:=\mf{B}(\R)$  and  $\B_d:=\B(\R^d)$, where $d\in\N$, stand for the Borel $\s$-algebras of subsets of $\R$ and $\R^d$, respectively, generated by the Euclidean topology $\mathfrak{E}$ over $\R$ and by the product topology $\mathfrak{E}^d$ over $\R^d$, respectively. Put $\ov \B:=\B(\ov\R):=\{A\subseteq\ov\R : A\cap\R\in\B\}$ and $\B(J):=\{B\cap J : B\in\ov\B\}$ for every interval $J$ in $\ov\R$. In particular, write $\B(0,\infty):=\B\big((0,\infty)\big)$, for simplicity. Our measure theoretic terminology is standard and generally follows  \cite{Co}.    For the definitions of real-valued random variables and random variables we refer to  \cite{Co},  page 308.   We apply   notation $P_{X}:=P_X(\theta):={\bf{K}}(\theta)$ to mean that $X$ is distributed according to the law ${\bf{K}}(\theta)$, where $\theta\in D\subseteq\R^d$  is the parameter of the distribution.  Notation  ${\bf Ga}(b,a)$, where $a,b\in(0,\infty)$, stands for the law  of gamma  distribution (cf., e.g., \cite{Sc},  page 180). In particular, ${\bf Ga}(b,1)={\bf Exp}(b)$ stands for the law of exponential distribution.  For the unexplained terminology of Probability and Risk Theory we refer to \cite{Sc}. \smallskip

Given a random variable $X$,  a {\bf conditional distribution of $X$ over $\vT$} is a $\s(\vT)$-$\mathfrak{B}$-Markov kernel (see \cite{ba},  Definition 36.1 for the definition) denoted by $P_{X\mid\vT}:=P_{X\mid\sigma(\vT)}$ and satisfying for each $B\in\mf{B}$ the equality
$P_{X\mid\vT}(\bullet,B)=P\big(X^{-1}(B)\mid\sigma(\vT)\big)(\bullet)$ ${P}{\restr}\sigma(\vT)$-almost surely (written a.s. for short). Clearly, for every $\mathfrak{B}(\R^d)$-$\mathfrak{B}$-Markov kernel $k$, the map $K(\vT)$ from $\vO\times\mathfrak{B}$ into $[0,1]$ defined by means of
\[
K(\vT)(\omega,B):=\big(k(\bullet,B)\circ\vT\big)(\omega)
\quad\mbox{for any}\;\;(\omega,B)\in \vO\times\mathfrak{B}
\]
is a $\s(\vT)$-$\mathfrak{B}$-Markov kernel. Then for $\theta=\vT(\omega)$ with $\omega\in\vO$ the probability measures $k(\theta,\bullet)$ are distributions on $\mathfrak{B}$ and so we may write $\mathbf{K}(\theta)(\bullet)$ instead of $k(\theta,\bullet)$. Consequently, in this case $K(\vT)$ will be denoted by 
$\mathbf{K}(\vT)$.\smallskip

For any real-valued random variables $X$, $Y$ on $\vO$ we say that $P_{X|\vT}$ and $P_{Y|\vT}$ are $P{\uph}\sigma(\vT)$-equivalent and we write $P_{X|\vT}=P_{Y|\vT}$ $P{\uph}\sigma(\vT)$-a.s.,  if there exists a $P$-null set $M\in\sigma(\vT)$ such that for any $\omega\notin M$ and $B\in \B$ the equality $P_{X|\vT}(\omega,B)=P_{Y|\vT}(\omega,B)$ holds true.\smallskip

 A sequence $\{V_n\}_{n\in \N}$ of real-valued random variables on  $\vO$ is said  to be:\smallskip 
 
 \noindent $\bullet$ $P$-{\bf conditionally (stochastically) independent over $\sigma(\vT)$} if, for each $n\in\N$ with $n\geq 2$ we have
	\[
	P\Big(\bigcap^{n}_{j=1} \{V_{i_j}\leq v_{i_j}\}\mid\sigma(\vT)\Big)=\prod^{n}_{j=1} P\big(\{V_{i_j}\leq v_{i_j}\}\mid\sigma(\vT)\big)\quad P{\uph}\sigma(\vT)\mbox{-a.s.},
	\]
whenever $i_1,\ldots, i_n$ are distinct members of $I\subseteq\N$ and $(v_{i_1},\ldots v_{i_n})\in\R^n$;  \smallskip
	
 \noindent $\bullet$  {\bf $P$-conditionally identically distributed over $\sigma(\vT)$} if, 
	\[
	P\big(F\cap V_k^{-1}[B]\big)=P\big(F\cap V_m^{-1}[B]\big)
	\]
	whenever $k,m\in \N$, $F\in\sigma(\vT)$ and $B\in \B$. \smallskip
	
	\noindent  We say that the process $\{V_n\}_{n\in \N}$ is $P${\bf-conditionally (stochastically) independent or identically distributed given $\vT$}, if it is conditionally independent or identically distributed over the $\sigma$-algebra  $\sigma(\vT)$.  \smallskip

{\em Throughout what follows  we write  ``conditionally" in the place of ``conditionally given $\vT$” whenever conditioning refers to  $\vT$.}\smallskip

For the definitions of a \textit{counting} (or \textit{claim number}) \textit{process} $N:=\{N_t\}_{t\in\R_+}$, an \textit{arrival process} $T:=\{T_n\}_{n\in\N_0}$ induced by $N$, an \textit{interarrival process} $W:=\{W\}_{n\in\N}$ induced by $T$, a \textit{claim size process} $X:=\{X_n\}_{n\in\N}$ and an \textit{aggregate  claims process} $S:=\{S_t\}_{t\in\R_+}$ induced by $N$ and $X$, we refer to \cite{Sc}. We assume that $P(\{X_n>0\})=1$ for all $n\in\N$. Recall that a pair $(N,X)$ is called a {\bf risk process}, if $N$ is a counting process, $X$ is $P$-i.i.d. and the processes $N$ and $X$ are $P$-mutually independent (see \cite{Sc},  page 127).  \smallskip

Recall that a counting process $N$  is said to be a $P$-{\bf mixed renewal process with mixing parameter $\vT$ and  interarrival time conditional distribution $\bf{K}(\vT)$} (written $P$-MRP$(\bf{K}(\vT))$ for short), if the  interarrival process $W$ is $P$-conditionally independent and for all $n\in\N$ condition
$P_{W_n\mid \vT}={\bf K}(\vT)$ $P{\restr}\s(\vT)$-a.s. is valid (see  \cite{lm6z3},  Definition 3.1).  In particular, if the distribution of $\vT$ is degenerate at some point $\theta_0\in D$, then the counting process $N$ becomes  a $P$-{\em renewal process with  interarrival time distribution ${\bf K}(\theta_0)$} (written $P$-RP$({\bf K}(\theta_0))$ for short).  If $N$ is a $P$-MRP$(\bf{K}(\vT))$ then according to \cite{mt2}, Corollary 3.5(ii), it has zero probability of explosion i.e., $P\big(\{\sup_{n\in\N_0} T_n<\infty\}\big)=0$.\smallskip

Accordingly,  an aggregate claims process $S$ induced by a $P$-risk process $(N,X)$  such that $N$ is a $P$-MRP$(\bf{K}(\vT))$ is called  a  {\bf compound mixed renewal process  with parameters ${\bf K}(\vT)$ and $P_{X_1}$} (written $P$-CMRP$({\bf K}(\vT),P_{X_1})$ for short). In particular, if the distribution of $\vT$ is degenerate at $\theta_0\in D$ then $S$ is called a  {\em compound  renewal process with parameters ${\bf K}(\theta_0)$ and $P_{X_1}$} ($P$-CRP$({\bf K}(\theta_0),P_{X_1})$ for short).\smallskip

{\em Throughout what follows we denote again by ${\bf K}(\vT)$ and ${\bf K}(\theta)$ the conditional distribution function and the distribution function  induced by the conditional probability distribution ${\bf K}(\vT)$ and the  probability distribution ${\bf K}(\theta)$, respectively.} \smallskip

Since conditioning is involved in the definition of (compound) mixed renewal processes, it is expected that regular conditional probabilities will play a fundamental role in their analysis.   To this purpose, recall  the following definition  (cf., e.g., \cite{mt2},  Definition 3.2).

\begin{df}\label{rcp}
	\normalfont
Let $(Z,H,R)$ be a probability  space. A family $\{P_z\}_{z\in Z}$ of probability measures on $\vS$ is called a {\bf regular conditional probability} (rcp for short) of $P$ over $R$  if for any fixed $E\in\vS$ the map $z\mapsto P_z(E)$ is $H$-measurable, and $\int P_{z}(E)\,R(dz)=P(E)$ for every $E\in \vS$.  If $f:\vO\rightarrow Z$ is an inverse-measure-preserving function (i.e., $P(f^{-1}(B))=R(B)$ for each $B\in{H}$), a rcp $\{P_{z}\}_{z\in Z}$ of $P$ over $R$ is called {\bf consistent} with $f$ if, for each $B\in{H}$, the equality $P_{z}(f^{-1}(B))=1$ holds for $R$-almost every $z\in B$. 
\end{df}

We say that a rcp $\{P_{z}\}_{z\in Z}$ of $P$ over $R$ consistent with $f$ is {\bf  essentially unique}, if for any other rcp $\{\wt P_{z}\}_{z\in Z}$ of $P$ over $R$ consistent with $f$ there exists a $R$-null set $M\in H$ such that for any $z\notin M$ the equality $P_z=\wt P_z$ holds true.\smallskip

{\em From now on  $(Z,H,R):=(D,\B(D), P_\vT)$ and the family $\{P_\theta\}_{\theta\in D}$ is a rcp of $P$ over $P_\vT$ consistent with $\vT$.}\smallskip

Let $\T\sq\R_+$. For a process $Y_\T:=\{Y_t\}_{t\in\T}$ denote by $\F_\T^Y:=\{\F^Y_t\}_{t\in\T}$ the canonical filtration of $Y_\T$. For  $\T=\R_+$ or $\T=\N$ we simply write $\F^Y$ instead of $\F^Y_{\R_+}$ or $\F^Y_{\N}$, respectively. Also, we write $\F:=\{\F_t\}_{t\in\R_+}$, where $\F_t:=\s(\F^S_t\cup\s(\vT))$, for the canonical filtration of $S$ and $\vT$, $\F^S_\infty:=\s(\bigcup_{t\in\R_+}\F^S_t)$ and $\F_\infty:=\s(\F^S_\infty\cup\s(\vT))$.

\begin{df}\label{df0}
	\normalfont
Let  $Q$ be a probability measures on $(\vO,\vS)$.\smallskip

\noindent  \textbf{(a)} 	
If $\G\subseteq\vS$ is a $\s$-algebra of subsets of $\vO$, then  $Q$ is \textit{absolutely continuous}  with respect to $P$ on $\G$ if $P(A)=0$ implies $Q(A)=0$ for all $A\in\G$. The measures 	$Q$ and $P$ are \textit{equivalent} on $\G$ (in symbols $Q{\uph}\G\sim P{\uph}\G$) if $Q$ is absolutely continuous with respect to $P$ on $\G$ and vice versa. If $\mathcal{G}=\vS$ simply write $P\sim{Q}$.
	\smallskip
	
\noindent  	\textbf{(b)} If $\{\G_t\}_{t\in\R_+}$ is a filtration for $\vS$, then $Q$ and $P$ are \textit{progressively equivalent} with respect to $\{\G_t\}_{t\in\R_+}$, if $Q$ and $P$   are equivalent on each  $\G_t$ (i.e., $Q{\uph}\G_t\sim P{\uph}\G_t$).
\end{df}

The following conditions will be useful for our investigations:
\begin{itemize}
	\item[{\bf(a1)}] The process $W$ and $X$ are $P$-conditionally  mutually independent.
	\item[{\bf(a2)}] The random vector $\vT$ and the process $X$ are $P$-(unconditionally) independent.
\end{itemize}

{\em Next, whenever condition (a1) or (a2) holds true we shall write that the quadruplet $(P,W,X,\vT)$ or (if no confusion arises) the probability measure $P$ satisfies (a1) or (a2), respectively}.

\begin{symbs}\label{symb2} 
\normalfont 
Denote by $\hypertarget{mkd}{\mathfrak{M}^k(D)}$, $k\in\N$, the class of all $\mathfrak{B}(D)$-$\B_k$-measurable functions on $D$. In the special case $k=1$ write $\mathfrak{M}(D):=\mathfrak{M}^1(D)$ and $\mathfrak{M}_+(D)$ for the class of all positive elements of $\mathfrak M(D)$. 
 Fix on arbitrary $\ell\in\{1,2\}$ and $\rho\in \mathfrak{M}^k(D)$.\smallskip
	
\noindent {\bf (a)} The class of all real-valued $\B\big((0,\infty)\times D\big)$-measurable functions $\be$ on $(0,\infty)\times D$, defined by means of $\be(x,\theta):=\ga(x)+\al(\theta)$ for any  $(x,\theta)\in(0,\infty)\times D$, where $\al\in\hyperlink{mkd}{\mathfrak M(D)}$ and $\ga$ is a real-valued $\B(0,\infty)$-measurable function satisfying conditions  $\E_{P}\left[e^{\ga(X_{1})}\right]=1$ and $\E_{P}\left[X^\ell_1\cdot e^{\ga(X_{1})}\right]<\infty$ (resp. $\E_P\big[e^{\ga(X_1)}\big]=1$),   will be denoted by $\hypertarget{fpt}{\F^\ell_{P,\vT}}:=\F^\ell_{P,\vT,X_1}$ (resp. $\F_{P,\vT}:=\F_{P,\vT,X_1})$. 
\smallskip

\noindent {\bf(b)} The  class of all  $\xi\in\hyperlink{mkd}{\mf{M}(D)}$ such that $P_{\vT}(\{\xi>0\})=1$ and $\E_P[\xi(\vT)]=1$ is denoted by $\hypertarget{r+d}{\mathcal{R}_+(D)}:=\mathcal{R}_+(D,\mathfrak{B}(D), P_{\vT})$.  \smallskip 
	
\noindent {\bf (c)} The class of all probability measures $Q$ on $\vS$, which satisfy conditions (a1)  and  (a2),   are progressively equivalent to $P$,  and such that $S$ is a $Q$-CMRP$({\bf \Lambda}(\rho(\vT)),Q_{X_1})$,  will be denoted by $\hypertarget{ms}{\M_{S,{\bf\Lambda}(\rho(\vT))}}{:=}\M_{S,{\bf \Lambda}(\rho(\vT)),P,{X_1}}$. The class of all elements $Q$ of $\M_{S,{\bf\Lambda}(\rho(\vT))}$ with $\E_Q[X^\ell_1]<\infty$  will be denoted by $\hypertarget{msl}{\M^\ell_{S,{\bf\Lambda}(\rho(\vT))}}{:=}\M^\ell_{S,{\bf \Lambda}(\rho(\vT)),P,{X_1}}$. In the special case $d=k$ and $\rho:=id_D$ we write $\mathcal{M}_{S,{\bf\Lambda}(\vT)}:=\mathcal{M}_{S,{\bf\Lambda}(\rho(\vT))}$ and $\M^\ell_{S,{\bf \Lambda}(\vT)}:=\M^\ell_{S,{\bf \Lambda}(\Ttheta)}$  for simplicity.\smallskip

\noindent {\bf (d})  Let $\theta\in D$. Denote  by $\hypertarget{mst}{{\M}_{S,{\bf\Lambda}(\ttheta)}}$  the class of all probability measures $Q_\theta$ on $\vS$, such that $Q_\theta{\uph}\F_t\sim P_\theta{\uph}\F_t$ for any $t\geq0$ and $S$ is a $Q_\theta$-CRP$({\bf \Lambda}(\ttheta),(Q_\theta)_{X_1})$. The class of all $Q_\theta\in{\M}_{S,{\bf\Lambda}(\ttheta)}$  with $\E_{Q_\theta}[X_1^\ell]<\infty$ is denoted by $\hypertarget{mstl}{{\M}^\ell_{S,{\bf\Lambda}(\ttheta)}}$.
\end{symbs} 

 {\em From now on, unless stated otherwise, $\rho\in\mathfrak{M}^k(D)$, $k\in\N$.}

\begin{rems}\label{rem1aa} 
	\normalfont
\textbf{(a)} 
Clearly inclusions $\hyperlink{fpt}{\F^2_{P,\vT}}\sq \hyperlink{fpt}{\F^1_{P,\vT}}\sq \hyperlink{fpt}{\F_{P,\vT}}$ and $\hyperlink{msl}{\M^{2}_{S,{\bf \Lambda}(\rho(\vT))}}\sq \hyperlink{msl}{\M^{1}_{S,{\bf \Lambda}(\rho(\vT))}} \sq\hyperlink{ms}{\M_{S,{\bf \Lambda}(\rho(\vT))}}$ hold true, but simple examples show that $\hyperlink{fpt}{\F^2_{P,\vT}}\neq \hyperlink{fpt}{\F^1_{P,\vT}}\neq \hyperlink{fpt}{\F_{P,\vT}}$ and $\hyperlink{msl}{\M^{2}_{S,{\bf \Lambda}(\rho(\vT))}}\neq \hyperlink{msl}{\M^{1}_{S,{\bf \Lambda}(\rho(\vT))}} \neq\hyperlink{ms}{\M_{S,{\bf \Lambda}(\rho(\vT))}}$ in general.
\smallskip

\noindent \textbf{(b)} For $\ell\in\{1,2\}$   the following statements are equivalent: 
\begin{enumerate}
	\item $P\in\hyperlink{msl}{\M^\ell_{S,{\bf K}(\vT)}}$ with $P\big(\big\{\E_P[W_1\mid\vT]<\infty\big\}\big)=1$;
	\item  there exists a $P_\vT$-null set $W_P\in\B(D)$  such that $P_\theta\in\hyperlink{mstl}{\M^\ell_{S,{\bf K}(\theta)}}$ with $(P_\theta)_{X_1}=P_{X_1}$  and $\E_{P_\theta}[W_1]<\infty$  for any  $\theta\notin {W_P}$. 
\end{enumerate}

In fact, since $X_1$ and $\vT$ are (unconditionally) independent by (a2), we have $X_1\in\mathcal{L}^\ell(P)$ if and only if $X_1\in\mathcal{L}^\ell(P_\theta)$ for all $\theta\in{D}$, while by \cite{mt2}, Proposition 3.4, we have $P\in\mathcal{M}_{S,{\bf K}(\vT)}$ if and only if there exists a $P_\vT$-null set $L_P\in\mathfrak{B}(D)$ such that $P_\theta\in\mathcal{M}_{S,{\bf K}(\theta)}$ with $(P_\theta)_{X_1}=P_{X_1}$ for all $\theta\notin L_P$. Furthermore, by \cite{lm1v}, Lemma 3.5(i), we get that $P(\{\E_P[W_1\mid\vT]<\infty\})=1$ if and only if there exists a $P_\vT$-null set ${D_P}\in\mathfrak{B}(D)$ such that $\E_{P_\theta}[W_1]<\infty$ for all $\theta\notin {D_P}$. Putting $W_P:=L_P\cup D_P\in \B(D)$ we get the desired  equivalence of (i) and (ii).
\end{rems}

\begin{df}
\normalfont
Recall that, for given $\T\sq\R_+$  a \textbf{martingale in $\mathcal{L}^\ell(P)$ adapted to the filtration $\mathcal{Z}_\T:=\{\mathcal{Z}_t\}_{t\in\T}$}, or else a \textbf{$\mathcal{Z}_\T$-martingale in $\mathcal{L}^\ell(P)$}, is a process $Z_\T:=\{Z_t\}_{t\in\T}$ of random variables in $\mathcal{L}^\ell(P)$ such that $Z_t$ is $\mathcal{Z}_t$-measurable for each $t\in\T$, and whenever $u\leq{t}$ in $\T$ and $E\in\mathcal{Z}_u$ then $\int_E{Z}_udP=\int_E{Z}_tdP$. The latter condition is called the \textbf{martingale property} (cf., e.g., \cite{Sc},  page 25). For $\mathcal{Z}_{\R_+}=\mathcal{F}$ we simply write that $Z$ is a martingale in $\mathcal{L}^\ell(P)$. A $\mathcal{Z}_\T$-martingale $\{Z_t\}_{t\in\T}$ in $\mathcal{L}^\ell(P)$ is {\bf $P$-a.s. positive},  if $Z_t$ is $P$-a.s. positive for each $t\in\T$.
\end{df}

Given $\vO:=(0,\infty)^{\N}\times(0,\infty)^{\N}\times D$ and $\vS:=\B(\vO)=\B(0,\infty)_{\N}\otimes\B(0,\infty)_{\N}\otimes\B(D)$, let $\mu$ be a probability measure on $\B(D)$,  and let $P_{n}(\theta):={\bf{K}}(\theta)$ and $R_n:=R$ be probability measures on $\B(0,\infty)$ for any $n\in\N$ and fixed $\theta\in D$. Assume that    for any fixed $B\in\B(0,\infty)$ the function $\theta\mapsto{\bf{K}}(\theta)(B)$ is $\B(D)$-measurable. It then follows by \cite{mt2}, Proposition 4.1,  that there exist:
\begin{itemize}
\item  a family $\{P_{\theta}\}_{\theta\in D}$ of probability measures   on $\vS$ and a probability measure $P$ on $\vS$ such that $\{P_{\theta}\}_{\theta\in D}$ is a rcp of $P$ over  $\mu$ consistent with $\vT:=\pi_D$, where $\pi_D$ is the canonical projection from $\vO$ onto $D$, and $P_\vT=\mu$; 
\item  an interarrival process $W$ such that  $(P_\theta)_{W_n}=\mathbf{K}(\theta)$ for all $n\in\N$; 
\item a claim size process $X$ such that $P_{X_n}=R$  for all $n\in\N$, and  
\item a counting process $N$ and an aggregate process $S$ induced by the risk process $(N,X)$, such that $P$ is an element of $\hyperlink{ms}{\M_{S,{\bf K}(\vT)}}$.
\end{itemize}

{\em  Throughout what follows, unless stated otherwise, $(\vO,\vS,P)$, $N , W, X, S, \vT$ and $\{P_{\theta}\}_{\theta\in D}$ are as above, $\vS=\F_\infty$, $\ell\in\{1,2\}$ and  $S_t^{(\ga)}:=\sum^{N_t}_{j=1}\ga(X_j)$ for any $t\geq 0$ and for any real-valued $\mf{B}(0,\infty)$-measurable function $\ga$ satisfying condition  $\E_{P}\left[e^{\ga(X_{1})}\right]=1$.}\smallskip
	
For the validity of the equality $\vS=\F_\infty$ see \cite{mt2}, Remark 4.2.\smallskip

The following result of \cite{mt2}, concerning a characterization of all progressively equivalent probability measures that convert a CMRP into a CMPP, serves as a useful basic tool for our results.

\begin{prop}\label{prop1}
(See \cite{mt2}, Corollary 4.8 and Remark 4.9(c)). 
For  $P\in\hyperlink{msl}{\mathcal{M}^\ell_{S,\mathbf{K}(\vT)}}$ with $P\big(\big\{\E_P[W_1\mid\vT]<\infty\big\}\big)=1$  the following hold true:
\begin{enumerate}
	\item for any pair $(\rho,Q)\in\hyperlink{mkd}{\mathfrak M_+(D)}\times\hyperlink{msl}{\M^\ell_{S,{\bf Exp}(\Ttheta)}}$   there exists an essentially unique pair $(\be,\xi)\in\hyperlink{fpt}{\F^\ell_{P,\vT}}\times\hyperlink{r+d}{\mathcal R_+(D)}$,  such that  
	\begin{gather}
		\tag{$\mbox{rnd}(f)$}
		\ga =\ln f\text{, where $f$ is a $P_{X_1}$-a.s.  positive Radon-Nikod\'{y}m derivative of $Q_{X_1}$ with respect to $P_{X_1}$},
		\label{rndx}
	\end{gather}
	\begin{gather}
		\tag{$\mbox{rnd}(\xi)$}		
		\xi\;\;\mbox{is a Radon-Nikod\'{y}m derivative of}\;\; Q_\vT\;\;\mbox{with respect to}\;\; P_\vT,
		\label{rnd} 
	\end{gather}
	\begin{gather}
		\tag{$\ast$}
		\al(\vT)=\ln\rho(\vT)+\ln\E_{P}[W_1\mid\vT]\;\;\;P{\uph}\sigma(\vT)\mbox{-a.s.},
		\label{ast}
	\end{gather}
	and
	\begin{gather}
		\label{martPP}
		\tag{$RPM_\xi$}
		Q(A)=\int_A  M^{(\be)}_t(\vT)\,dP\quad\text{for all}\,\,\,0\leq s\leq t\,\,\text{and}\,\, A\in\F_{s},
	\end{gather}
	where 
	\[
	M^{(\be)}_t(\vT):= \xi(\vT)\cdot  \frac{e^{S_t^{(\ga)}- \Ttheta\cdot J_t}}{  1-{\bf{K}}(\vT) (J_t)}\cdot \prod_{j=1}^{N_t}\frac{d{\bf Exp}(\Ttheta)}{d{\bf K}(\vT)}(W_j), 
	\]
	with $J_t:=t-T_{N_t}$,  and the family $M^{(\be)}(\vT):=\{M^{(\be)}_t(\vT)\}_{t\in\R_+}$ is a $P$-$\text{a.s}$ positive martingale in $\mathcal{L}^1(P)$; 
	\item conversely, for any pair function $(\be,\xi)\in\hyperlink{fpt}{\F^\ell_{P,\vT}}\times\hyperlink{r+d}{\mathcal R_+(D)}$  there exists a unique pair $(\rho,Q)\in\hyperlink{mkd}{\mathfrak M_+(D)}\times\hyperlink{msl}{\M^\ell_{S,{\bf Exp}(\Ttheta)}}$  determined by  conditions \eqref{ast} and  \eqref{martPP}  and satisfying conditions  \eqref{rndx} and \eqref{rnd};
	\item  
	in both cases (i) and (ii), there exists an essentially unique rcp $\{Q_{\theta}\}_{\theta\in D}$ of $Q$ over $Q_\vT$  consistent with $\vT$ and a $P_\vT$-null set $\wt L_{\ast\ast}\in\B(D)$  satisfying for any $\theta\notin \wt L_{\ast\ast}$ conditions $Q_\theta\in\hyperlink{mstl}{{\M}^\ell_{S,{\bf Exp}(\ttheta)}}$,  \eqref{rndx}, 
	\begin{gather}
		\ttheta=\frac{e^{\al(\theta)}}{\E_{P_\theta}[W_1]},
		\tag{$\wt{\ast}$}
		\label{*}
	\end{gather}
	and
	\begin{gather}
		\tag{$RPM_\theta$}
		Q_\theta(A)=\int_{A} \wt M^{(\be)}_{t}(\theta)\,dP_\theta\quad\text{for all}\,\,\,0\leq s\leq t\,\,\text{and}\,\, A\in\F_{s},
		\label{rcp3}
	\end{gather} 
	where 
	\[
	\wt{M}^{(\be)}_t(\theta):= \frac{e^{S_t^{(\ga)}- \ttheta\cdot J_t}}{  1-{\bf{K}}(\theta) (J_t)}\cdot \prod_{j=1}^{N_t}\frac{d{\bf Exp}(\ttheta)}{d{\bf K}(\theta)}(W_j), 
	\]
	and the family $\wt{M}^{(\be)}(\theta):=\{\wt{M}^{(\be)}_t(\theta)\}_{t\in\R_+}$ is a $P_\theta$-$\text{a.s.}$ positive martingale in $\mathcal{L}^1(P_\theta)$, where $\wt L_{\ast\ast}$ is the $P_\vT$-null sets appearing in \cite{mt2}, Corollary 4.8, containing the $P_\vT$-null set $W_P\in\B(D)$ appearing in Remark \ref{rem1aa}(b).		
\end{enumerate}
\end{prop}

\section{A Characterization of Progressively Equivalent Martingale Measures for Compound Mixed Renewal Processes}\label{CRPM}

In this section  we find out a wide class of {\em canonical}   processes satisfying the condition of {\em no free lunch with vanishing risk} (written (NFLVR) for short) (see \cite{ds},  Definition 8.1.2), connecting in this way our results with this basic notion in mathematical finance.  \smallskip

In order to present the results of this section we recall the following notions. For a given real-valued process $Y:=\{Y_t\}_{t\in\R_+}$ on $(\vO,\vS)$ a probability measure $Q$ on $\vS$ is called a $\ell$-{\bf martingale measure} for $Y$, if $Y$ is a martingale in $\mathcal L^\ell(Q)$. We will say that $Y$ satisfies condition (PEMM) if there exists a 2-martingale measure $Q$ for $Y$, which is progressively equivalent to $P$.   Moreover, let $T>0$, $\mathbb T:=[0,T]$, $\F_{\mathbb T}:=\{\F_t\}_{t\in\mathbb T}$, $Q_T:=Q{\uph}\F_T$ and $Y_{\mathbb T}:=\{Y_t\}_{t\in\mathbb T}$.   We will say that the process $Y_{\mathbb T}$ satisfies condition (EMM) if there exists a 2-martingale measure $Q_T$ for $Y_{\T}$, which is equivalent to $P_T$.

\begin{symbs}\label{symb2a} 
\normalfont 
 {\bf(a)}   	For given $\be\in\hyperlink{fpt}{\F^\ell_{P,\vT}}$, denote by $\hypertarget{r+da}{\mathcal R^{\ast,\ell}_{+}(D)}:=\mathcal R^{\ast,\ell}_{+,\be}(D)$ the class of all functions $\xi\in\hyperlink{r+d}{\mathcal R_+(D)}$ such that 
\[
\xi(\vT)\cdot\Big(\frac{e^{\al(\vT)}}{\E_P[W_1\mid\vT]}\Big)^\ell\in\mathcal L^1(P),
\]
under the assumption $P\big(\big\{\E_P[W_1\mid\vT]<\infty\big\}\big)=1$. 
\smallskip 
	
\noindent	{\bf (b)} 	Denote by $\hypertarget{msla}{\M^{\ast,\ell}_{S,{\bf \Lambda}(\rho(\vT))}}$ the class of all measures $Q\in\hyperlink{msl}{\M^\ell_{S,{\bf \Lambda}(\rho(\vT))}}$ satisfying condition 
\[
\big(1/\E_Q[W_1\mid\vT]\big)^\ell\in\mathcal L^1(Q),
\] 
under the assumption $Q\big(\big\{\E_Q[W_1\mid\vT]<\infty\big\}\big)=1$. 
\smallskip

\noindent	{\bf (c}) For arbitrary $\theta\in{D}$ denote by $\hypertarget{mstla}{\M^{\ast,\ell}_{S,{\bf\Lambda}(\ttheta)}}$ the class of all probability measures $Q_\theta\in\hyperlink{mstl}{{\M}^\ell_{S,{\bf\Lambda}(\ttheta)}}$ such that $(1/\E_{Q_\bullet}[W_1])^\ell\in\mathcal{L}^1(Q_{\vT})$ under the assumption $Q_\vT\big(\big\{\E_{Q_\bullet}[W_1]<\infty\big\}\big)=1$.   
\end{symbs} 

\begin{exs}\label{rem1a} 
	\normalfont
\textbf{(a)} Inclusions  $\hyperlink{r+da}{\mathcal{R}_+^{\ast,2}(D)}\subsetneqq \hyperlink{r+da}{\mathcal{R}_+^{\ast,1}(D)}\subsetneqq \hyperlink{r+d}{\mathcal{R}_+(D)}$ hold true. \smallskip

Clearly $\hyperlink{r+da}{\mathcal{R}_+^{\ast,2}(D)}\sq \hyperlink{r+da}{\mathcal{R}_+^{\ast,1}(D)}\sq \hyperlink{r+d}{\mathcal{R}_+(D)}$.  Let $D:=(0,\infty)$ and $P\in \hyperlink{msl}{\M^\ell_{S,{\bf Exp}(\vT)}}$  and  assume  that $P_\vT$ is absolutely continuous with respect to the Lebesgue measure $\lambda$ on $\mf{B}$ restricted to $\B(D)$. Let $f_\vT$ be the corresponding   probability density functions of $\vT$ with respect to $P$. Consider the real-valued functions $\be(x,\theta):=\theta$ for all $x,\theta>0$  and $\xi(\theta):= \frac{a\cdot{e}^{-a\cdot \theta}}{f_{\vT}(\theta)}$ for each $\theta>0$,  where $a>0$ is a constant. A straightforward computation yields $\be\in\hyperlink{fpt}{\F^\ell_{P,\vT}}$ and  $\xi\in\hyperlink{r+d}{\mathcal{R}_+(D)}$. However, for $a\in(0,1]$ we have $\xi\notin\hyperlink{r+da}{\mathcal{R}^{\ast,1}_+(D)}$, while for $a\in(1,2]$ we have  $\xi\in\hyperlink{r+da}{\mathcal{R}^{\ast,1}_+(D)}\setminus\hyperlink{r+da}{\mathcal{R}^{\ast,2}_+(D)}$;  hence the required inclusions follow.  	\smallskip

\noindent \textbf{(b)} Inclusion  $\hyperlink{msla}{\M^{\ast,\ell}_{S,{\bf \Lambda}(\rho(\vT))}}\subsetneqq \hyperlink{msl}{\M^{\ell}_{S,{\bf \Lambda}(\rho(\vT))}}$ holds true. \smallskip
	
Clearly, inclusion $\hyperlink{msla}{\M^{\ast,\ell}_{S,{\bf \Lambda}(\rho(\vT))}}\sq \hyperlink{msl}{\M^{\ell}_{S,{\bf \Lambda}(\rho(\vT))}}$ holds.  Take $D$ as in (a), and assume that $\Ttheta{=}e^\vT$ and $Q\in \hyperlink{msl}{\M^\ell_{S,{\bf Exp}(\Ttheta)}}$ with  $Q_{\vT}={\bf Exp}(\eta)$, where $\eta<\ell$ is a positive constant. It then follows that $\E_Q[e^{\ell\cdot \vT}]=\infty$, implying that $Q\notin \hyperlink{msla}{\M^{\ast,\ell}_{S,{\bf Exp}(\rho(\vT))}}$. 
\end{exs}
	
\begin{rem}\label{rem1aaa}
	\normalfont
	The following statements are equivalent: 
	\begin{enumerate}
		\item  $P\in\hyperlink{msla}{\M^{\ast,\ell}_{S,{\bf K}(\vT)}}$;	
		\item  $P_\theta\in\hyperlink{mstla}{\M^{\ast,\ell}_{S,{\bf K}(\theta)}}$ with $(P_\theta)_{X_1}=P_{X_1}$  for all $\theta\notin {W_P}$, where $W_P\in\B(D)$ is the $P_\vT$-null set appearing in Remark \ref{rem1aa}(b).
	\end{enumerate}
	
	In fact, by Remark  \ref{rem1aa}(b) there exists a $P_\vT$-null set $W_P\in\mathfrak{B}(D)$ such that $P_\theta\in\hyperlink{mst}{\mathcal{M}^\ell_{S,{\bf K}(\theta)}}$ with $(P_\theta)_{X_1}=P_{X_1}$ and $\E_{P_\theta}[W_1]<\infty$  for all $\theta\notin W_P$. Taking now into account the fact 
	\[
	\int\Big(\frac{1}{\E_P[W_1\mid\vT]}\Big)^\ell\,dP=\int \Big(\frac{1}{\E_{P_\theta}[W_1]}\Big)^\ell\,P_\vT(d\theta),
	\]
	which is a consequence of \cite{lm1v}, Lemma 3.5, we get the claimed equivalence.
\end{rem}

In the next theorems we provide a characterization of all progressively equivalent martingale measures $Q$ on $\vS$ converting a CMRP under $P$ into a CMPP under $Q$, in such a way that they are associated to stochastic processes satisfying condition (NFLVR).

\begin{thm}\label{class} 
	If $P\in\hyperlink{msla}{\M^{\ast,\ell}_{S,{\bf K}(\vT)}}$ the following statements hold true:
	\begin{enumerate}
\item 
for every pair $(\rho,Q)\in\hyperlink{mkd}{\mathfrak{M}_+(D)}\times\hyperlink{msla}{\M^{\ast,\ell}_{S,{\bf Exp}(\rho(\vT))}}$ there exists an essentially unique pair $(\beta,\xi)\in\hyperlink{fpt}{\mathcal{F}^\ell_{P,\vT}}\times\hyperlink{r+da}{\mathcal{R}^{\ast,\ell}_{+}(D)}$ satisfying conditions  \eqref{rndx}, \eqref{rnd}, \eqref{ast} and \eqref{martPP}, so that $Q$ is an $\ell$-martingale measure for the process $V(\vT):=\{V_t(\vT)\}_{t\in\R_+}$ with $V_t(\vT):=S_t-t\cdot\frac{\E_P[X_1\cdot e^{\be(X_1,\vT)}\mid\vT]}{\E_P[W_1\mid\vT]}$ for any $t\geq0$;
\item
conversely, for every pair $(\beta,\xi)\in\hyperlink{fpt}{\mathcal{F}^\ell_{P,\vT}}\times\hyperlink{r+da}{\mathcal{R}^{\ast,\ell}_{+}(D)}$ there exists a unique pair $(\rho,Q)\in\hyperlink{mkd}{\mathfrak{M}_+(D)}\times\hyperlink{msl}{\M^{\ast,\ell}_{S,{\bf Exp}(\rho(\vT))}}$ determined by  conditions \eqref{ast} and \eqref{martPP}, satisfying conditions   \eqref{rndx}  and \eqref{rnd},  so that $Q$ is an $\ell$-martingale measure for $V(\vT)$;
\item
in both cases (i) and (ii), there exist a an essentially unique rcp $\Qtd$ of $Q$ over $Q_\vT$ consistent with $\vT$  satisfying for any $\theta\notin {\wt L_{\ast\ast}}$  conditions $Q_\theta\in{\M}^{\ast,\ell}_{S,{\bf Exp}(\ttheta)}$, \eqref{rndx}, \eqref{*} and \eqref{rcp3},  so that $Q_\theta$ is an $\ell$-martingale measure for the process $V(\theta):=\{V_t(\theta)\}_{t\in\R_+}$ with $V_t(\theta):=S_t-t\cdot\frac{\E_{P_{\theta}}[X_1\cdot e^{\be(X_1,\theta)}]}{\E_{P_{\theta}}[W_1]}$ for any $t\geq0$, where ${\wt L_{\ast\ast}}$ is the $P_\vT$-null set appearing in Proposition \ref{prop1}.
	\end{enumerate}	
\end{thm}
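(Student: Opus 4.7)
The plan is to assemble the statement from two ingredients that are already in hand: Theorem \ref{qm0}, which establishes the essentially unique correspondence between pairs $(\rho,Q)\in\mathfrak{M}_+(D)\times\M^{\ast,\ell}_{S,{\bf Exp}(\rho(\vT))}$ and pairs $(\be,\xi)\in\mathcal{F}^\ell_{P,\vT}\times\mathcal{R}^{\ast,\ell}_+(D)$ (together with \eqref{ast}, \eqref{martPP}, \eqref{*} and \eqref{rcp3}), and Proposition \ref{thm4}, which identifies the canonical drift-corrected martingale associated with a CMPP. The correspondence being already delivered, the only new content to establish is that $V(\vT)$ is a $Q$-martingale in (i) and (ii), and that $V(\theta)$ is a $Q_\theta$-martingale in (iii).

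For (i), I would first invoke Theorem \ref{qm0}(i) to extract the pair $(\be,\xi)$. Because $Q\in\M^{\ast,\ell}_{S,{\bf Exp}(\rho(\vT))}$, Proposition \ref{thm4} applied under $Q$ (implication (ii)$\Longrightarrow$(i)) yields that the process
\[
\bigl\{S_t - t\cdot \E_Q[X_1]/\E_Q[W_1\mid\vT]\bigr\}_{t\in\R_+}
\]
is a martingale in $\L^\ell(Q)$. It then suffices to identify this process with $V(\vT)$. Using $\be(x,\theta)=\ga(x)+\al(\theta)$, condition (a2), and $e^{\ga}=f=dQ_{X_1}/dP_{X_1}$, one computes
\[
\E_P\bigl[X_1\cdot e^{\be(X_1,\vT)}\bigm|\vT\bigr]=e^{\al(\vT)}\cdot\E_P\bigl[X_1\cdot e^{\ga(X_1)}\bigr]=e^{\al(\vT)}\cdot\E_Q[X_1],
\]
and by \eqref{ast}, $e^{\al(\vT)}=\rho(\vT)\cdot\E_P[W_1\mid\vT]$, so the drift defining $V(\vT)$ equals $\rho(\vT)\cdot\E_Q[X_1]$. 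On the other hand, under $Q$ we have $Q_{W_1\mid\vT}={\bf Exp}(\rho(\vT))$, whence $\E_Q[W_1\mid\vT]=1/\rho(\vT)$ $Q\uph\sigma(\vT)$-a.s., so $\E_Q[X_1]/\E_Q[W_1\mid\vT]=\rho(\vT)\cdot\E_Q[X_1]$ as well, matching the first expression. For (ii), Theorem \ref{qm0}(ii) yields $(\rho,Q)$ from $(\be,\xi)$, and the computation from (i) applies verbatim.

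For (iii), I would apply Theorem \ref{qm0}(iii) to produce the rcp $\{Q_\theta\}_{\theta\in D}$ and the $P_\vT$-null set $\wt L_{\ast\ast}$; then, for every $\theta\notin M_\ast:=C_\ast\cup\wt L_{\ast\ast}$, where $C_\ast$ is the exceptional set supplied by Proposition \ref{thm4}, I would apply the degenerate version of that proposition to $Q_\theta$, using \eqref{*} in place of \eqref{ast}. The unconditional analogue of the drift computation---with $P_\theta$, $Q_\theta$ and ordinary expectations replacing $P$, $Q$ and conditional expectations---shows that $V(\theta)$ coincides with the canonical martingale $\bigl\{S_t - t\cdot\E_{Q_\theta}[X_1]/\E_{Q_\theta}[W_1]\bigr\}$ under $Q_\theta$. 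The main obstacle I anticipate is the bookkeeping of exceptional sets and verifying that the integrability hypotheses of Proposition \ref{thm4}(ii)---notably $\bigl(1/\E_Q[W_1\mid\vT]\bigr)^\ell\in\L^1(Q)$ and $X_1\in\L^\ell(Q)$---are transferred from the defining conditions of $\M^{\ast,\ell}_{S,{\bf Exp}(\rho(\vT))}$, so that Proposition \ref{thm4} may legitimately be invoked under $Q$ (respectively under $P_\theta$ for $P_\vT$-a.e.\ $\theta$) rather than under $P$.
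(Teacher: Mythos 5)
Your proposal is correct and follows essentially the same route as the paper: statements (i) and (ii) are derived by combining Theorem \ref{qm0}(i),(ii) with Proposition \ref{thm4} applied under $Q$, and (iii) comes from Theorem \ref{qm0}(iii) together with the degenerate version of Proposition \ref{thm4} outside $M_\ast:=C_\ast\cup\wt L_{\ast\ast}$. The explicit drift identification you carry out (via $\be=\ga+\al$, (a2), \eqref{ast}, and $\E_Q[W_1\mid\vT]=1/\rho(\vT)$) is precisely the gluing step that the paper leaves implicit, and your check of the integrability hypotheses is the right point to be careful about.
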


\begin{proof}  \noindent Ad (i):  Since $\hyperlink{msla}{\M^{\ast,\ell}_{S,{\bf Exp}(\rho(\vT))}}\subseteq\hyperlink{msl}{\M^{\ell}_{S,{\bf Exp}(\rho(\vT))}}$,  there exists an essentially unique pair $(\beta,\xi)\in\hyperlink{fpt}{{\F}^\ell_{P,\vT}}\times\hyperlink{r+d}{\mathcal{R}_+(D)}$ satisfying conditions \eqref{rndx}, \eqref{rnd}, \eqref{ast} and \eqref{martPP}, by Proposition \ref{prop1}(i).  Our assumption  $Q\in\hyperlink{msl}{\M^{\ast,\ell}_{S,{\bf Exp}(\rho(\vT))}}$, along with condition \eqref{ast}, yields  $\rho(\vT)\in\mathcal L^{\ell}(Q)$, implying $\xi(\vT)\cdot (e^{\al(\vT)}/\E_P[W_1\mid\vT])^\ell\in\mathcal L^1(P)$ by condition \eqref{rnd}; hence $(\beta, \xi)\in\hyperlink{fpt}{\F^{\ell}_{P,\vT}}\times\hyperlink{r+da}{\mathcal{R}^{\ast,\ell}_{+}(D)}$. Applying now \cite{lm3},  Proposition 5.1\rm{(ii)},  we get that $Q$ is an $\ell$-martingale measure for the process $V(\vT)$.\smallskip

\noindent Ad (ii): Since  $\hyperlink{fpt}{\F^{\ell}_{P,\vT}}\subseteq \hyperlink{fpt}{\F_{P,\vT}}$ and $\hyperlink{r+da}{\mathcal{R}^{\ast,\ell}_{+}(D)}\subseteq \hyperlink{r+d}{\mathcal{R}_{+}(D)}$, it follows by Proposition \ref{prop1}(ii),  that there exists a unique pair $(\rho,Q)\in\hyperlink{mkd}{\mf{M}_+(D)}\times\hyperlink{msl}{\M^{\ell}_{S,{\bf Exp}(\rho(\vT))}}$ determined by  conditions \eqref{ast} and  \eqref{martPP}   and satisfying conditions  \eqref{rndx} and \eqref{rnd}, implying, along with the assumptions of (ii), that $\xi(\vT)\cdot (e^{\al(\vT)}/\E_P[W_1\mid\vT])^\ell\in\mathcal L^1(P)$; hence $\left(1/\E_Q[W_1\mid\vT]\right)^\ell\in\mathcal L^1(Q)$. Thus, $(\rho,Q)\in\hyperlink{mkd}{\mf{M}_+(D)}\times\hyperlink{msl}{\M^{\ast,\ell}_{S,{\bf Exp}(\rho(\vT))}}$. Again by \cite{lm3},  Proposition 5.1\rm{(ii)}, we get that $Q$ is an $\ell$-martingale for $V(\vT)$.  \smallskip

\noindent Ad (iii):  In both cases (i) and (ii), by Proposition \ref{prop1}(iii), there exists an essentially unique rcp $\{Q_\theta\}_{\theta\in{D}}$ of $Q$ over $Q_\vT$ consistent with $\vT$ and a $P_\vT$-null set ${\wt L_{\ast\ast}}$, such that for any $\theta\notin {\wt L_{\ast\ast}}$ conditions $Q_\theta\in\hyperlink{mstl}{\M^\ell_{S,{\bf Exp}(\ttheta)}}$,  \eqref{rndx},  \eqref{*} and \eqref{rcp3} hold true.  But since   $Q\in\hyperlink{msla}{\M^{\ast,\ell}_{S,{\bf Exp}(\rho(\vT))}}$,  it follows by Remark \ref{rem1aaa} that $Q_\theta\in\hyperlink{mstla}{\M^{\ast,\ell}_{S,{\bf Exp}(\ttheta)}}$ for any  $ \theta\notin{\wt L_{\ast\ast}}$; hence, we can apply \cite{mt3}, Proposition 4.2,  to complete the proof of statement (iii). 
\end{proof}

\begin{thm}\label{ftap}
Let $P\in\hyperlink{msla}{\M^{\ast,2}_{S,{\bf K}(\vT)}}$. For every pair $(\be,\xi)\in\hyperlink{fpt}{{\F}^{2}_{P,\vT}}\times\hyperlink{r+da}{\mathcal{R}^{\ast,2}_{+}(D)}$ there exists a unique pair $(\rho,Q)\in\hyperlink{mkd}{\mf{M}_+(D)}\times\hyperlink{msla}{\M^{\ast,2}_{S,{\bf Exp}(\rho(\vT))}}$  determined by   conditions \eqref{ast} and \eqref{martPP} and satisfying conditions \eqref{rndx} and \eqref{rnd}, and an essentially unique rcp  $\{Q_\theta\}_{\theta\in{D}}$  of $Q$ over $Q_\vT$ consistent with $\vT$  satisfying for any $\theta\notin {\wt L_{\ast\ast}}$ conditions $Q_\theta\in\hyperlink{msla}{{\M}^{\ast,2}_{S,{\bf Exp}(\ttheta)}}$,  \eqref{rndx}, \eqref{*} and \eqref{rcp3},  so that:
\begin{enumerate}
\item the process $V_{\mathbb T}(\vT):=\{V_t(\vT)\}_{t\in\mathbb T}$ satisfies condition  (NFLVR);
		
\item  for any $\theta\notin {\wt L_{\ast\ast}}$ the process $V_{\mathbb T}(\theta):=\{V_t(\theta)\}_{t\in\mathbb T}$ satisfies condition (NFLVR),
\end{enumerate}
where ${\wt L_{\ast\ast}}\in\B(D)$ is the $P_\vT$-null set appearing in  Theorem \ref{class} and $\T:=[0,T]$ with $T>0$.
\end{thm}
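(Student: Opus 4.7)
The plan is to derive both (NFLVR) statements from the existence of equivalent martingale measures supplied by Theorem \ref{class}, via the Delbaen--Schachermayer Fundamental Theorem of Asset Pricing (\cite{ds}).

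First I would invoke Theorem \ref{class}(ii) with $\ell=2$: for the prescribed pair $(\be,\xi)\in\F^{2}_{P,\vT}\times\mathcal R^{\ast,2}_{+}(D)$ this produces the unique pair $(\rho,Q)\in\mf M_+(D)\times\M^{\ast,2}_{S,{\bf Exp}(\rho(\vT))}$ determined by \eqref{martPP} and satisfying \eqref{ast}, under which $V(\vT)$ is an $\F$-martingale. Next I would apply Theorem \ref{class}(iii), which yields the essentially unique rcp $\{Q_\theta\}_{\theta\in D}$ of $Q$ over $Q_\vT$ consistent with $\vT$ together with the $P_\vT$-null set $M_\ast\in\B(D)$, and guarantees that for every $\theta\notin M_\ast$ we have $Q_\theta\in\M^{\ast,2}_{S,{\bf Exp}(\ttheta)}$ satisfying \eqref{*} and \eqref{rcp3}, with $V(\theta)$ being an $\F$-martingale under $Q_\theta$. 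This settles all the structural assertions preceding items (i) and (ii).

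To prove (i), restrict to the finite horizon $\mathbb T=[0,T]$ and set $Q_T:=Q\uph\F_T$, $P_T:=P\uph\F_T$. The progressive equivalence $Q\uph\F_t\sim P\uph\F_t$ for every $t\in\R_+$ in particular gives $Q_T\sim P_T$, while restricting the $\F$-martingale $V(\vT)$ to $\mathbb T$ shows that $V_{\mathbb T}(\vT)$ is an $\F_{\mathbb T}$-martingale under $Q_T$; hence $Q_T$ is an equivalent martingale measure for $V_{\mathbb T}(\vT)$. Since $V_{\mathbb T}(\vT)$ is a $Q_T$-martingale it is a $Q_T$-semimartingale, and by invariance of the semimartingale class under equivalent changes of measure it is also a $P_T$-semimartingale. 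The easy direction of the Fundamental Theorem of Asset Pricing (\cite{ds}, Theorem 8.1.2) then yields condition (NFLVR) for $V_{\mathbb T}(\vT)$.

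Statement (ii) is proved by repeating the argument $\theta$-wise: for every $\theta\notin M_\ast$, the restriction $Q_\theta\uph\F_T$ is, by the progressive equivalence of $Q_\theta$ and $P_\theta$ supplied by Theorem \ref{qm0}(iii), an equivalent martingale measure for the semimartingale $V_{\mathbb T}(\theta)$, and a second invocation of \cite{ds}, Theorem 8.1.2, delivers (NFLVR) for $V_{\mathbb T}(\theta)$. The one delicate point in this program is verifying the semimartingale hypothesis together with enough integrability to invoke the FTAP directly through an equivalent martingale measure (rather than through the more general $\sigma$-martingale formulation); this is precisely what the hypothesis $\ell=2$ is designed to secure, since $X_1\in\L^2(Q)$ combined with $1/\E_Q[W_1\mid\vT]\in\L^2(Q)$ yields via Wald's identities that $V_T(\vT)\in\L^2(Q)$, and analogously $V_T(\theta)\in\L^2(Q_\theta)$ for $\theta\notin M_\ast$.
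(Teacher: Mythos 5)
Your argument follows essentially the same route as the paper: invoke Theorem \ref{class}(ii)--(iii) to produce $(\rho,Q)$, the rcp $\{Q_\theta\}$, the null set $M_\ast$, and the martingale property of $V(\vT)$ and $V(\theta)$ under $Q$ and $Q_\theta$ respectively, then pass to the finite horizon and deduce (NFLVR) from the existence of an equivalent martingale measure via the FTAP of Delbaen--Schachermayer. The only substantive difference is that for item (ii) you redo the FTAP argument $\theta$-wise, whereas the paper simply cites its own renewal-model predecessor (\cite{mt3}, Theorem 4.1), which already treats the non-mixed case; both are valid. One small caveat: you cite \cite{ds}, Theorem 8.1.2, which is formulated for \emph{locally bounded} semimartingales, whereas the aggregate claims process here need not be locally bounded; the paper correctly appeals to the general (unbounded) FTAP, \cite{ds}, Theorem 14.1.1. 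Since you only need the easy implication (existence of an equivalent martingale measure $\Rightarrow$ NFLVR), the logical content is unaffected, but the reference should be Theorem 14.1.1.
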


\begin{proof}  Ad (i): By Theorem \ref{class}(ii) there exist a unique pair $(\rho,Q)\in\hyperlink{mkd}{\mf{M}_+(D)}\times\hyperlink{msla}{\M^{\ast,2}_{S,{\bf Exp}(\rho(\vT))}}$  determined by conditions \eqref{ast} and  \eqref{martPP}  and satisfying conditions  \eqref{rndx} and \eqref{rnd}, so that the process $V(\vT)$ is a martingale in $\mathcal L^2(Q)$; hence  for any  $T>0$ the process $V_{\mathbb T}(\vT)$ is a  $\F_{\mathbb T}$-martingale in $\mathcal L^2(Q_T)$, implying that it is a  $\F_{\mathbb T}$-semi-martingale in $\mathcal L^2(Q_T)$ (cf., e.g., \cite{vw}, Chapter 1, Section 1.3, Definition on page 23). The latter implies that $V_{\mathbb T}(\vT)$ is also a  $\F_{\mathbb T}$-semi-martingale in $\mathcal L^2(P_T)$ since $Q_T\sim P_T$ (cf., e.g., \cite{vw}, Theorem 10.1.8). Because the process $V(\vT)$ satisfies condition (PEMM), we get that the process $V_{\mathbb T}(\vT)$ must satisfy condition (EMM). Thus, we can apply the Fundamental Theorem of Asset Pricing of  Delbaen \& Schachermayer for unbounded stochastic processes, see \cite{ds}, Theorem 14.1.1, in order to conclude that the process $V_{\mathbb T}(\vT)$ satisfies condition (NFLVR). \smallskip
	
\noindent Ad (ii): By Theorem \ref{class}(iii) there exists an essentially unique rcp $\Qtd$  of $Q$ over $Q_{\vT}$ consistent with $\vT$   such that  for any $\theta\notin{\wt L_{\ast\ast}}$ conditions $Q_\theta\in\hyperlink{mstla}{\M^{\ast,2}_{S,{\bf Exp}(\ttheta)}}$,  \eqref{rndx}, \eqref{*} and \eqref{rcp3} are valid; hence  we may apply \cite{mt3}, Theorem 4.1, to obtain assertion (ii).\end{proof}

\section{An Application  to the Ruin Problem} \label{rp}
 
The fact that $\{P_\theta\}_{\theta\in D}$ is an rcp of $P$ over $P_\vT$ consistent with $\vT$, allows for the extension of some well-known results from the Poisson or renewal risk models, to their mixed counterpart. In fact,  whenever in the $P_\theta$-Cram\'{e}r-Lundberg or the $P_\theta$-Sparre Andersen risk model an explicit formula for the (infinite time) ruin probability exists, then one can just mix over the involved parameter in order to obtain explicit formulas for the corresponding mixed risk models (compare Albrecher et al. \cite{acl}, Sections 3 and 5). However, such explicit formulas, which are also computationally feasible, can  be obtained only in certain special cases, e.g., when the claim sizes follow a gamma distribution (see Constantinescu et al. \cite{csz}), a Coxian distribution (see Landriault \& Willmot \cite{lw}),  or a general phase type distribution (cf., e.g.,  Asmussen \& Albrecher \cite{asal}, Chapter IX, Theorem 4.4).  If we are interested in an exact figure for the  ruin probability in a general mixed renewal risk model,  then the only method available seems to be simulation.  In the setting of a finite horizon ruin probability, it is straightforward to use the Crude Monte Carlo method to simulate it, see \cite{asal}, page 462. The situation is more complicated for an infinite horizon ruin probability. The difficulty is that the indicator function of the ruin event in such a case cannot be simulated in finite time: no finite segment of $S$ can tell whether ruin will ultimately occur or not. In order to overcome this difficulty, some methods have been developed (cf., e.g., \cite{asal}, Chapter XV, Sections 2-5). Among them,  the most celebrated is the  change of measures technique, which give us the opportunity to express the ruin event as a quantity under the new measure  so that ruin occurs almost surely.  \smallskip

The most common change of measures techniques applied to the Sparre Andersen risk model arise from the martingales constructed via the so-called Backward or Forward Markovization Techniques for the reserve process (see Dassios \& Embrechts \cite{daem}, Section 2.3, and Dassios \cite{da}, Section 3.5,  respectively, in connection with e.g., Schmidli \cite{scm}, Sections 8.1-8.3),  where the martingales (and thus the new measures) are obtained as solutions of partial differential equations (see also \cite{mt2}, Proposition 4.15, for a simplified construction of the martingales/measures arising from the Backward Markovization Technique).  These techniques have been widely used to solve various ruin related problems (see e.g.,   Embrechts et al. \cite{esg}, Ng \& Yang \cite{ny}, Schmidli \cite{scm95, scm10} and Tzaninis \cite{t1}), as they allow for the construction of a suitable probability measure $Q^{(r)}$, where $r>0$, so that ruin occurs $Q^{(r)}$-a.s.. However, as the main assumption for their construction is the existence of the moment generating functions $M_{X_1}$ of the claim size distribution, heavy-tailed distributions (cf., e.g., \cite{rss}, Section 2.5, for the definition and their basic properties) are naturally excluded. Note that in such a case 
one can still obtain a measure $Q^{(r)}$ arising from an exponential martingale, by taking $r<0$, but in this case ruin does not occur $Q^{(r)}$-a.s. (see \cite{scm10}, Section 6, as well as our Example \ref{countercla}). The previous discussion raises the problem of constructing a probability measure $Q$ being progressively equivalent to $P$  and so the ruin occurs $Q$-a.s. but without necessarily needing the assumption that $M_{X_1}$ exists.\smallskip

In this section   we characterize all progressively equivalent   measures $Q$ on $\vS$  that convert  a  $P$-CMRP  into a $Q$-CMPP, in such a way that ruin occurs $Q$-a.s., see Theorem \ref{ruin1a}. Such a characterization allows us to find an explicit formula for the ruin probability under $P$. To this purpose we first need to prove the following   auxiliary results. \smallskip 
 
In order to justify the definition of a \textit{conditional premium density} we need the  next lemma, which  extends a  well known result in the case of a kind of  compound mixed Poisson processes (cf., e.g., \cite{gr},  Proposition 9.1).    

\begin{lem}\label{lim}
	If $P\in\hyperlink{msl}{\M^{1}_{S,{\bf K}(\vT)}}$  and  $P\big(\big\{\E_P[W_1\mid\vT]<\infty\big\}\big)=1$ then 
	\[
	\lim_{t\rightarrow\infty}\frac{S_t}{t} =\frac{\E_P[X_1]}{\E_P[W_1\mid\vT]}\quad P\text{-a.s.}.
	\]
\end{lem}
\begin{proof} 	Since $P\in\hyperlink{msl}{\M^1_{S,{\bf K}(\vT)}}$ and  $P\big(\big\{\E_P[W_1\mid\vT]<\infty\big\}\big)=1$, it follows by Remark  \ref{rem1aa}(b) that there exists a $P_\vT$-null set ${W_P}\in\mathfrak{B}(D)$ such that $P_\theta\in\hyperlink{mst}{\mathcal{M}^1_{S,{\bf K}(\theta)}}$ with $(P_\theta)_{X_1}=P_{X_1}$ and $\E_{P_\theta}[W_1]<\infty$  for any  $\theta\notin  W_P$.   Fix on an arbitrary $\theta\notin {W_P}$. We first show the validity of condition 
	\begin{equation}
		\lim_{t\rightarrow\infty}\frac{N_t}{t}=\frac{1}{\E_P[W_1\mid\vT]}\qquad P\text{-a.s.}.
		\label{41a}
	\end{equation}
	
	In fact, consider the function  $v:=\1_{\left\{\lim_{t\to\infty}\frac{N_t(\bullet)}{t}=\frac{1}{\E_{P_{\bullet}}[W_1]}\right\}}:\vO\times D\rightarrow [0,1]$ and put $g:=v\circ (id_\vO\times\vT)=\1_{\left\{\lim_{t\to\infty}\frac{N_t}{t}=\frac{1}{\E_P[W_1\mid\vT]}\right\}}$. Since $v\in\mathcal L^1(M)$, where $M:=P\circ (id_\vO\times\vT)^{-1}$, we may apply \cite{lm1v},  Proposition 3.8(i), to get  $\E_{P}\left[g\mid\vT\right]=\E_{P_\bullet}\left[v^{\bullet}\right]\circ \vT$ $P{\uph}\sigma(\vT)\text{-a.s.}$, implying     
	\begin{align*}
		P\left(\left\{\lim_{t\rightarrow\infty}\frac{N_t}{t}=\frac{1}{\E_P[W_1\mid\vT]}\right\}\right) 	&=\int\E_P[g\mid\vT]\,dP=\int\E_{P_\bullet}\left[v^{\bullet}\right]\circ \vT\,dP\\
		&=\int P_\theta\left(\left\{\lim_{t\rightarrow\infty}\frac{N_t}{t}=\frac{1}{\E_{P_\theta}[W_1]}\right\}\right) \, P_\vT(d\theta). 
	\end{align*}
	But since  $N$ is a $P_\theta$-RP$({\bf K}(\theta))$, we may apply \cite{gut},  Section 2.5, Theorem 5.1,  to get  
	\begin{equation}
		\lim_{t\rightarrow\infty}\frac{N_t}{t}=\frac{1}{\E_{P_\theta}[W_1]}\quad  P_\theta\text{-a.s..}
		\label{lim1}
	\end{equation}
The latter, along with \cite{lm1v}, Lemma 3.5(i), yields condition \eqref{41a}.\smallskip

	Since  the process $S$ is a $P_\theta$-CRP$({\bf K}(\theta),(P_\theta)_{X_1})$  with $(P_\theta)_{X_1}=P_{X_1}$,  we may apply \cite{gut},  Section 1.2, Theorem 2.3(iii), in order to get   
	\begin{equation}
		\lim_{t\rightarrow\infty}\frac{S_t}{N_t}= \E_{P_{\theta}}[X_1]\quad P_\theta\text{-a.s.};
		\label{lim0}
	\end{equation} 
	implying along with condition (a2) and \cite{lm1v},  Lemma 3.5(i), that $\lim_{t\rightarrow\infty}\frac{S_t}{N_t}=\E_P[X_1]$ $P$-a.s.. The latter, along with condition  \eqref{41a}, completes the proof.
\end{proof}

\begin{dfs}
\normalfont
Let $P\in\hyperlink{msl}{\M^{1}_{S,{\bf K}(\vT)}}$ and  $P\big(\big\{\E_P[W_1\mid\vT]<\infty\big\}\big)=1$. For any $\theta\notin {W_P}$, where $W_P\in\B(D)$ is the $P_\vT$-null set appearing in Remark  \ref{rem1aa}(b), conditions \eqref{lim1} and \eqref{lim0} imply  
\[
\lim_{t\rightarrow\infty}\frac{S_t}{t}=\frac{\E_{P_\theta}[X_1]}{\E_{P_\theta}[W_1]}\qquad P_\theta\text{-a.s.},
\]
that is, the limit $\lim_{t\rightarrow\infty}\frac{S_t}{t}$ coincides with the {\bf premium density} $p(P_\theta)$, i.e., the monetary payout per unit time, in a $P_\theta$-Sparre Andersen model  (see \cite{mt3},  page 54, for more details). Thus,   we may define a {\bf conditional premium density} for $P$ by means of 
\[
p(P,\vT):=\frac{\E_P[X_1]}{\E_P[W_1\mid\vT]} \quad P{\uph}\sigma(\vT)\text{-a.s..}
\]
In particular, for any $P\in\hyperlink{msla}{\M^{\ast,1}_{S,{\bf K}(\vT)}}$ we may define the corresponding \textbf{mixed premium density} by means of $p(P):=\E_P[p(P,\vT)]$.
\end{dfs}

\begin{rem}\label{dispd} 
\normalfont
If $P\in\hyperlink{msla}{\M^{\ast,\ell}_{S,\mathbf{K}(\vT)}}$  then the following statements are equivalent: 
\begin{enumerate}
	\item $p(P,\vT)$ is a conditional premium density  for $P$;
	\item  there exists a $P_\vT$-null set $W_{P,1}\in\B(D)$ such that $p(P,\theta)$ is the premium density for $P_\theta$, i.e., $p(P,\theta)=p(P_\theta):=\frac{\E_{P_\theta}[X_1]}{\E_{P_\theta}[W_1]}$ for any $\theta\notin W_{P,1}$. 
 \end{enumerate} 
 
In fact, first note that according to Remark \ref{rem1aaa}, there exists a $P_\vT$-null set $W_P\in\B(D)$, such that $P_\theta\in\hyperlink{mstla}{\M^{\ast,\ell}_{S,{\bf K}(\theta)}}$ with $(P_\theta)_{X_1}=P_{X_1}$   and $\E_{P_\theta}[W_1]<\infty$ for any  $\theta\notin  W_P$; hence $p(P_\theta)=\frac{\E_{P_\theta}[X_1]}{\E_{P_\theta}[W_1]}$ for any $\theta\notin W_P$. Furthermore, as statement (i) is equivalent to 
	\[
	\int_{\vT^{-1}[F]} p(P,\vT)\,dP=\int_{\vT^{-1}[F]}\frac{\E_P[X_1]}{\E_P[W_1\mid\vT]}\,dP \quad\text{for every } F\in\B(D),
	\]
	we can apply \cite{lm1v}, Lemma 3.5(i), to get
	\[
	\int_{F} p(P,\theta)\,P_\vT(d\theta)=\int_{F}\frac{\E_{P}[X_1]}{\E_{P_\theta}[W_1]}\,P_\vT(d\theta)=\int_{F}\frac{\E_{P_\theta}[X_1]}{\E_{P_\theta}[W_1]}\,P_\vT(d\theta)\quad\text{for every } F\in\B(D), 
	\]
	where the second equality follows by condition (a2),  implying that there exists a $P_\vT$-null set $W_P^{\prime}\in \B(D)$, so that $p(P,\theta)=\frac{\E_{P_\theta}[X_1]}{\E_{P_\theta}[W_1]}$ for any $\theta\notin W_P^{\prime}$. Putting now $W_{P,1}:=W_P^{\prime}\cup W_P\in\B(D)$, we get the desired equivalence of (i) and (ii). 
 \end{rem}

\begin{dfs}\label{ir}
\normalfont
Let  $S$ be an aggregate claims process induced by a counting process $N$ and a claims size process $X$. Fix on arbitrary $u>0$ and $t\geq0$ and define the function $r^u_t:\vO\times D\rightarrow\R$  by means of $r^u_t(\omega,\theta):=u+c(\theta)\cdot t-S_t(\omega)$ for any $(\omega,\theta)\in\vO\times D$, where $c$ is a positive $\mf{B}(D)$-measurable function.   For arbitrary but fixed $\theta\in D$   the process $r^u(\theta):=\{r_t^u(\theta)\}_{t\in\R_+}$, defined by means of 
$r_t^u(\theta)(\omega):=r_t^u(\omega,\theta)$ for any $\omega\in\vO$, is called the {\bf reserve process} induced by the {\bf initial reserve} $u$, the {\bf premium intensity} or {\bf premium rate} $c(\theta)$ and the aggregate claims process $S$ (cf., e.g., \cite{Sc},  pages 155--156).  The function $\psi_\theta:(0,\infty)\rightarrow[0,1]$ defined by means of $\psi_\theta(u):=P_{\theta}(\{\inf_{t\in\R_+} r^u_t(\theta)<0\})$	is called the {\bf probability of ruin} for the reserve process $r^u(\theta)$ with respect to $P_\theta$ (cf., e.g., \cite{Sc},  page 158).    \smallskip

Define the real-valued function $R_t^u(\vT)$ on $\vO$ by means of    $R_t^u(\vT):=r_t^u\circ(id_{\vO}\times\vT)$. The process $R^u(\vT):=\{R_t^u(\vT)\}_{t\in\R_+}$ is called the {\bf reserve process} induced by the initial reserve $u$, the {\bf stochastic premium intensity} or {\bf stochastic premium rate} $c(\vT)$ and the aggregate claims process $S$. The function $\psi$ defined by $\psi(u):=P(\{\inf_{ {t\in\R_+}}{R}_t^u(\vT)<0\})$ is called the {\bf probability of ruin} for the reserve process $R^u(\vT)$ with respect to $P$.  \smallskip

The  \textbf{ruin time} of the reserve process $r^u(\theta)$ is defined as $\tau_u(\theta):=\inf\{t\geq0 : r^u_t(\theta)<0\}$ (compare  e.g., \cite{scm},  page 84).  We define the \textbf{ruin time}  of the reserve process $R^u(\vT)$ by means of $T_u(\vT):=\tau_u\circ(id_\vO\times\vT)$. Recall that for the probabilities of ruin $\psi_\theta(u)$ and $\psi(u)$ for the reserve processes $r^u(\theta)$ and $R^u(\vT)$, respectively, defined above, we have 
$\psi_\theta(u)=P_\theta(\{\tau_u(\theta)<\infty\})$ and $\psi(u)=P(\{T_u(\vT)<\infty\})$,
see e.g., \cite{rss},  page 148. 
\end{dfs} 

\noindent \textit{Throughout what follows in this section, unless stated otherwise, $P\in\hyperlink{msla}{\M^{\ast,\ell}_{S,\mathbf{K}(\vT)}}$.}
 
\begin{lem}
\label{psi1}
The following statements are equivalent:
\begin{enumerate}
\item $c(\vT)\leq p(P,\vT)$ $P{\uph}\sigma(\vT)$-a.s.;
\item $\psi(u)=1$ for any $u\geq0$.
\end{enumerate}
\end{lem}

\begin{proof}
Fix on arbitrary $u\geq0$. \smallskip
	
\noindent Ad (i)$\Rightarrow$(ii):  If (i) holds, get by \cite{lm1v}, Lemma 3.5(i), the existence of a  $P_\vT$-null set $\wt{M}_P\in\mf{B}(D)$ such that $c(\theta)\leq{p}(P,\theta)$ for all $\theta\notin\wt{M}_P$.  But, due to Remark \ref{dispd}, there exists a $P_\vT$-null set $W_{P,1}\in\mf{B}(D)$ such that $p(P,\theta)=p(P_\theta)$ and $P_\theta\in\hyperlink{mstla}{\M^{\ast,\ell}_{S,{\bf K}(\theta)}}$ for all $\theta\notin{W}_{P,1}$. Putting $M_P:=\wt{M}_P\cup{W}_{P,1}\in\mf{B}(D)$  get $P_\vT(M_P)=0$ and $c(\theta)\leq{p}(P_\theta)$ for all $\theta\notin{M}_P$;  hence for any $\theta\notin M_{P}$ we may apply  \cite{Sc},  Corollary 7.1.4, which remains  true also for $u=0$, to  obtain  $\psi_\theta(u)=1$. The latter along with \cite{mt2},  Remark 3.6, yields  $\psi(u)=\int_D \psi_\theta(u)\, P_\vT(d\theta)=1$. \smallskip
	
\noindent Ad (ii)$\Rightarrow$(i): Since $N$ has zero probability of explosion we may apply \cite{Sc}, Lemma 7.1.2, which remains  true also for $u=0$, along with \cite{mt2}, Remark 3.6, to get that $\psi(u)=P\big(\{\inf_{n\in\N_0}U^u_n(\vT)<0\}\big)$, where $U^u_n(\vT):=u+\sum_{j=1}^n\big(c(\vT)\cdot W_j-X_j\big)$ for any $n\in\N_0$. But since $\psi(u)=1$,  there exists  a positive integer $n_0$ such that $U^u_{n_0}(\vT)<0$ $\,P$-a.s.; hence there exists a natural number $n_1\leq{n_0}$ such that $c(\vT)\cdot W_{n_1}-X_{n_1}<0$ $\, P$-a.s.,  implying along with condition (a2) 
\[
0\geq c(\vT)\cdot \E_P[W_{n_1}\mid\vT]-\E_P[X_{n_1}\mid\vT]=c(\vT)\cdot \E_P[W_{n_1}\mid\vT]-\E_P[X_{n_1}] \quad P{\uph}\sigma(\vT)\text{-a.s.};
\]
hence  $c(\vT)\leq \frac{\E_P[X_{n_1}]}{\E_P[W_{n_1}\mid\vT]}$ $\,P{\uph}\sigma(\vT)$-a.s.. But since $W$ is $P$-identically distributed by \cite{mt2}, Remark 2.1, and $X$ is $P$-i.i.d., it follows that statement (i) holds. 
\end{proof}

\begin{df}\label{dfnpc}
\normalfont
We say that the pair $(P,\vT)$ satisfies the {\bf conditional net profit condition} (written \hypertarget{cnpc}{$\big(\text{cnpc}(P,\vT)\big)$}  for short), if there exists a set $H\in\s(\vT)$ with $P(H)>0$ such that 
\[
c(\vT{\restr}{H})>p\big(P{\restr} \vS\cap{H},\vT{\restr}{H}\big)\quad P{\restr} \s(\vT)\cap{H}\text{-a.s.},
\]
where $\vS\cap{H}$ and $\s(\vT)\cap{H}$ are the subspace $\s$-algebras  of subsets of $H$ (see \cite{fr1}, 121A and  Notation on page 35), and $P{\restr}\vS\cap{H}$, $P{\restr}\s(\vT)\cap{H}$ are the subspace measures on $\vS\cap{H}$,  $\s(\vT)\cap{H}$, respectively (see \cite{fr1},  131B).  \smallskip

 We say that the pair $(P,\vT)$ satisfies the {\bf strong conditional net profit condition}, if 
\begin{gather}
c(\vT)>p(P,\vT)\quad P{\uph}\sigma(\vT)\text{-a.s.}  
\tag{scnpc$(P,\vT)$}
\label{scnpc}
\end{gather} 
is fulfilled. 
\end{df}

Lemma \ref{psi1} implies that in order to avoid a $P$-a.s. ruin we have to choose the stochastic premium  rate $c(\vT)$  in such a way  that the conditional net profit condition \hyperlink{cnpc}{$\big(\text{cnpc}(P,\vT)\big)$} is fulfilled. 

\begin{rem}
\normalfont
\label{dispd4}
The following statements are equivalent:
\begin{enumerate}
\item
$P$ satisfies condition  \eqref{scnpc};
\item
there exists a $P_\vT$-null set $O_P\in\B(D)$, containing the $P_\vT$-null set $W_{P,1}$ appearing Remark \ref{dispd}, such that for any $\theta\notin{O}_P$ the measure $P_\theta$ is an element of $\hyperlink{mstla}{\M^{\ast,\ell}_{S,{\bf K}(\theta)}}$    satisfying condition  $c(\theta)>p(P_\theta)$.
\end{enumerate}
	
In fact, (i) holds if and only if  there exist a $P_\vT$-null set $\wt{O}_P\in\B(D)$ such that  $c(\theta)>p(P,\theta)$ for any $\theta\notin\wt{O}_P$ by \cite{lm1v}, Lemma 3.5(i), and a $P_\vT$-null set $W_{P,1}$ in $\B(D)$  such that $P_\theta\in\M^{\ast,\ell}_{S,{\bf K}(\theta)}$ and $p(P,\theta)=p(P_\theta)$ for any $\theta\notin\wt W_{P,1}$ by Remark \ref{dispd}. Putting $O_P:=\wt{O}_P\cup {W}_{P,1}$  get (i)$\Leftrightarrow$(ii).  
\end{rem}

The next result has been proven in a more general setting   for multivariate counting processes in \cite{ja}, Proposition 3.39(a). However, as it is essential for the proof of the main result of the present section (see Theorem \ref{ruin1a}), we write it (together with its proof) exactly in the form needed for our purposes. Let $(\vO,\vS,P)$ be an arbitrary probability space. Recall that a filtration $\{\G_t\}_{t\in\R_+}$ for $(\vO,\vS)$ is called  \textbf{right-continuous} if $\G_{t+}:=\bigcap_{s>t}\G_s=\G_t$ for any $t\geq 0$.   In order to present it,  and put $\wt{\H}_0:=\{\emptyset,\vO\}$ and $\wt{\H}_n:=\s\big(\F^W_n\cup\F^X_n\big)$ for any $n\in\N$. Consider the filtration $\H:=\{\H_n\}_{n\in\N_0}$ for $(\vO,\vS)$ with $\mathcal H_n:=\s\big(\wt{\H}_n\cup\s(\vT)\big)$ for any $n\in\N_0$ and put $\mathcal H_\infty=\s\big(\bigcup_{n\in\N_0}\mathcal H_n\big)$.   In the next result it is not assumed that $N$ has zero probability of explosion. Since our interest does not exceed the information generated by the aggregate claims process $S$ and the structural parameter $\vT$, (it is not restrictive to) assume that $\vS=\F_{\infty}$.\smallskip

\begin{prop}\label{p1} 
Let $(\vO,\vS,P)$ be an arbitrary probability space, and let $S$ be an arbitrary aggregate claims process induced by a claim number process $N$ and a claim size process $X$. The  canonical filtration $\mathcal{F}$ generated by $S$ and $\vT$ is right-continuous.
\end{prop}

\begin{proof}
Fix on arbitrary $t\geq 0$. \smallskip

\noindent {\bf (a)} The equality
\[
\F_t=\Big\{\Big(\bigcup_{k\in\N_0} B_k\cap\{N_t=k\}\Big)\cup\big(B_\infty\cap\{T_\infty\leq t\}\big) : B_n\in\mathcal{H}_n \text{ for each }  n\in\overline{\N}_0\Big\} 
\]
holds true, where  $T_\infty:=\sup_{n\in\N_0}T_n$. \smallskip

In fact,  first note that since $\{T_\infty\leq t\}=\bigcap_{n\in\N_0}\{T_n\leq t\}\in\F_t$ for any $t\geq 0$, we deduce that the random variable $T_\infty$ is a  $\F$-stopping time (cf., e.g., \cite{rss}, page 404, for the definition of a stopping time).  Put 
\[
\A_t:=\Big\{\Big(\bigcup_{k\in\N_0} B_k\cap\{N_t=k\}\Big)\cup\big(B_\infty\cap\{T_\infty\leq t\}\big) : B_n\in\mathcal{H}_n \text{ for each }  n\in\overline{\N}_0\Big\} .
\]
It can be readily proven that $\mathcal{A}_t$ is a $\s$-algebra of subsets of $\vO$, and that $\{\mathcal{A}_t\}_{t\in\R_+}$ is a filtration  for $(\vO,\vS)$. To show that $\mathcal{A}_t\sq\mathcal{F}_t$, let $A\in\mathcal{A}_t$ be arbitrary. It follows that 
\[
A=\Big(\bigcup_{k\in\N_0} B_k\cap\{N_t=k\}\Big)\cup\big(B_\infty\cap\{T_\infty\leq t\}\big)	
\]
for some sets $B_n\in\mathcal H_n$ for each $n\in\ov\N_0$. But since $\mathcal H_n\subseteq \F_{T_n}$ for any $n\in\ov\N_0$, where $\F_{T_n}:=\{A\in\vS : A\cap\{T_n\leq t\}\in\F_t\text{ for all } t\geq 0\}$, we get  $B_k\cap\{N_t=k\}, B_\infty\cap\{T_\infty\leq t\}\in\F_t$, implying  $A\in\F_t$. As $A\in\A_t$ is arbitrary, we obtain $\A_t\subseteq \F_t$.
\smallskip

To show the inverse inclusion, let $E\in\bigcup_{u\in[0,t]}\big(\s(S_{u})\cup\s(\vT)\big)$ be arbitrary. If $E\in\s(\vT)$, then there exists a set $A\in\B(D)$ such that $E=\vT^{-1}[A]\in\s(\vT)$, implying $E\in\mathcal{A}_t$; hence $\s(\vT)\sq\mathcal{A}_t$. If  $E\in\bigcup_{u\in[0,t]}\s(S_{u})$  there exist a $u\in[0,t]$ such that $E\in\s(S_u)$, and so $E=S^{-1}_{u}[C]$ for some $C\in\B\big([0,\infty]\big)$, implying 
\[
E=\Big(\bigcup_{k\in\N_0} B_k\cap\{N_{u}=k\}\Big)\cup\big(B_\infty\cap\{T_\infty\leq {u}\big)\in\A_u\sq\mathcal{A}_t,
\]
where $B_k:=\big(\sum_{j=1}^k X_j \big)^{-1}[C]\in\wt{\H}_k\sq\mathcal H_k$ for all $k\in\ov{\N}_0$.  As $E$ is arbitrary we get  $\bigcup_{u\in[0,t]}\big(\s(S_{u})\cup\s(\vT)\big)\subseteq\A_t$, implying $\mathcal{F}_t\sq\mathcal{A}_t$.\smallskip

\noindent  {\bf (b)} $\H_\infty=\F_\infty$. 
\smallskip

In fact, because $\H_\infty\sq\F_{T_\infty}\sq\F_\infty$, we have $\H_\infty\sq\F_\infty$, while by (a) we get $\F_t\sq\H_\infty$ for all $t\geq 0$; hence $\F_\infty\sq\H_\infty$. \smallskip

\noindent {\bf (c)} Write $\mathcal{K}_t$ for the family of all $A\in\mathcal{H}_\infty$ such that for each $n\in\N_0$ there exists a set $A_n\in\mathcal{H}_n$ with $A\cap\{t<{T}_{n+1}\}=A_n\cap\{t<{T}_{n+1}\}$.
By standard computations it follows that $\mathcal K_t$ is a $\s$-algebra of subsets of $\vO$  and that $\mathcal K_t=\F_t$. \smallskip
   
\noindent {\bf (d)} The filtration $\mathcal F$ is right-continuous.\smallskip

Using similar arguments to those of the proof of Proposition 3.39 of \cite{ja}, we get that the filtration $\{\mathcal{K}_t\}_{t\in\R_+}$ is right-continuous, and so applying (c) we infer that $\mathcal{F}$ is right-continuous.  
\end{proof}
 
 \begin{rems}\label{r1} 
\normalfont
\textbf{(a)} If the counting process $N$ has zero probability of explosion, we get $\{T_\infty\geq{t}\}=\emptyset$; hence 
\[
\F_t=\Big\{\bigcup_{k\in\N_0} B_k\cap\{N_t=k\}: B_k\in\mathcal{H}_k \text{ for each }  k\in\N_0\Big\}. 
\] 

\noindent \textbf{(b)} The canonical filtration $\mathcal{F}^S$ of an arbitrary aggregate claims process $S$  is right-continuous. The proof runs with arguments similar to those of the proof of Proposition \ref{p1}. An alternative proof for the right-continuity of $\F^S$ works by arguments similar to those of Protter \cite{pro}, Theorem 25.\smallskip 
 

\noindent \textbf{(c)} Given an arbitrary counting process $N$, the canonical filtration  $\F^{N,\vT}$ generated by $N$ and $\vT$ is right-continuous. The proof runs with arguments similar to those of the proof of Proposition \ref{p1}.
\end{rems} 
  
\begin{lem}
\label{st}
Let $(\vO,\vS,P)$ be an arbitrary probability space. The following hold true:
\begin{enumerate}
	\item the ruin time $T_u(\vT)$ is a $\F$-stopping time;  
	\item for any $\theta\in D$ the ruin time $\tau_u(\theta)$ is a  $\F$-stopping time.
\end{enumerate}
\end{lem}
\begin{proof}
Ad (i): Let $t\geq 0$.  Since $R^u(\vT)$ has right-continuous paths  it follows that
\[
\{T_u(\vT)<t\}=\bigcup_{q\in\Q_t}\big\{R^u_q(\vT)<0\big\}\in\F_t,
\]
where $\Q_t:=\Q\cap[0,t)$ (compare \cite{rss}, Theorem 10.1.1), implying along with \cite{rss}, Lemma 10.1.1, that $T_u(\vT)$ is a  $\{\F_{t+}\}_{t\in\R_+}$-stopping time. But by Proposition \ref{p1}, $\F$ is right-continuous, and thus we may apply again \cite{rss}, Lemma 10.1.1, in order to get that $T(\vT)$ is a  $\F$-stopping time.
\smallskip

\noindent Ad (ii): Fix on arbitrary $\theta\in D$. Using the arguments of the proof of statement (i), we get that $\tau_u(\theta)$ is a   $\{\F^S_{t+}\}_{t\in\R_+}$-stopping time.  Consequently, since $\F^S$ is right-continuous by Remark \ref{r1}(b), applying \cite{rss}, Lemma 10.1.1, we get that $\tau_u(\theta)$ is a  $\F^S$-stopping time; hence it is a $\F$-stopping time since $\F^S_t\sq \F_t$ for all $t\geq 0$. 
\end{proof}

According to Theorem \ref{class}(ii), for every pair  $(\be,\xi)\in\hyperlink{fpt}{\F^\ell_{P,\vT}}\times\hyperlink{r+da}{\mathcal{R}^{\ast,\ell}_{+}(D)}$  there exists a unique pair $(\rho,Q)\in\hyperlink{mkd}{\mathfrak M_+(D)}\times\hyperlink{msla}{\mathcal{M}^{\ast,\ell}_{S,\mathbf{Exp}(\rho(\vT))}}$  determined by conditions \eqref{ast} and \eqref{martPP} and satisfying conditions \eqref{rndx}, \eqref{rnd},  so that the process $M^{(\be)}(\vT)$, involved in condition \eqref{martPP}, is a $P$-a.s. positive martingale in $\mathcal L^{1}(P)$; hence 
\[
P(A)=\E_Q\big[1/M_t^{(\be)}(\vT)\,\1_A\big]\quad\text{for all } t\geq 0\text{ and } A\in\F_t.
\] 
Since the ruin time $T_u(\vT)$ is a $\F$-stopping time by Lemma \ref{st}(i), we may apply the Optional Stopping Theorem (cf., e.g., \cite{scm}, Proposition B.2) in order to  get
\begin{gather}
\psi(u)=\E_Q\big[1/M_{T_u(\vT)}^{(\be)}(\vT)\,\1_{\{T_u(\vT)<\infty\}}\big]\quad \text{ for any }  u\geq 0.
\label{ruinq}
\end{gather}
However, the latter formula is quite complicated mainly due to the (possible) dependence between $1/M_{T_u(\vT)}^{(\be)}(\vT)$ and $\1_{\{T_u(\vT)<\infty\}}$.   Thus, we have to carefully choose an appropriate pair $(\rho,Q)\in\hyperlink{mkd}{\mathfrak M_+(D)}\times\hyperlink{msla}{\mathcal{M}^{\ast,\ell}_{S,\mathbf{Exp}(\rho(\vT))}}$   in order to eliminate such a dependence. This motivates  us to  introduce the following subclasses $\mathcal{M}^{ruin,\ell}_{S,\mathbf{Exp}(\rho(\vT))}$ and $\F^{ruin,\ell}_{P,\vT}$ of $\hyperlink{msla}{\mathcal{M}^{\ast,\ell}_{S,\mathbf{Exp}(\rho(\vT))}}$ and $\hyperlink{fpt}{{{\F}^{\ell}_{P,\vT}}}$, respectively,  which seem to be suitable for the applications of our results to the ruin problem, and to present Theorem \ref{ruin1a}.

\begin{symbs}
\label{notaruin}
\normalfont
Let  $\rho\in\hyperlink{mkd}{\mf{M}_+(D)}$. \smallskip

\noindent  \textbf{(a)} Denote by  $\hypertarget{mslr}{\mathcal{M}^{ruin,\ell}_{S,\mathbf{Exp}(\rho(\vT))}}$ the family of all probability measures  $Q\in\hyperlink{msla}{\mathcal{M}^{\ast,\ell}_{S,\mathbf{Exp}(\rho(\vT))}}$ such that 
\begin{gather}
	c(\vT)\leq{p}(Q,\vT)\;\; P{\restr}\s(\vT)\text{-a.s.,} 
	\tag{a.s.ruin$(Q,\vT)$}
	\label{ncnpc}
\end{gather} 
and by   $\hypertarget{fptr}{{\F}^{ruin,\ell}_{P,\vT}}$ the family of all functions $\beta\in\hyperlink{fpt}{\mathcal{F}^\ell_{P,\vT}}$  so that 
\begin{gather}	
	c(\vT)\leq\frac{\E_P[X_1\cdot{e}^{\beta(X_1,\vT)}\mid\vT]}{\E_P[W_1\mid\vT]}\quad P{\uph}\sigma(\vT)\mbox{-a.s.}.  
	\tag{a.s.ruin$(\be,\vT)$}	
	\label{ncnpbc}
\end{gather}

\noindent \textbf{(b)}  For given $\theta\in{D}$ denote by   $ \hypertarget{mstlr}{\mathcal{M}^{ruin,\ell}_{S,\mathbf{Exp}(\rho(\theta))}}$ the family of all probability measures $Q_\theta\in\hyperlink{mstla}{\mathcal{M}^{\ast,\ell}_{S,\mathbf{Exp}(\rho(\theta))}}$  with 
\begin{gather}	
c(\theta)\leq{p}(Q_\theta).  
\tag{a.s.ruin$(Q_\theta)$}
\label{ncnpqt}
\end{gather}

\noindent  \textbf{(c)} 
	For $\hyperlink{msla}{Q\in\M^{\ast,\ell}_{{\bf Exp}(\rho(\vT))}}$ we denote by  
	$(\hypertarget{npcpteq}{\mbox{a.s.ruin}(Q,\vT)_{\mbox{eq}}})$
	the property
	\[
	c(\vT)=p(Q,\vT)\;\; P{\restr}\sigma(\vT)\mbox{-a.s.}
	\]
	and by $\hypertarget{npcptheq}{(\mbox{a.s.ruin}(Q_\theta)_{\mbox{eq}})}$ the property
	\[
	c(\theta)=p(Q_\theta)\;\;\mbox{for}\;\; P_\vT\mbox{-a.a.}\;\; \theta\in{D}.
	\]
	By $\hypertarget{ncnpbceq}{(\mbox{a.s.ruin}(\beta,\vT)_{\mbox{eq}})}$ is denoted the property
	\[	
	c(\vT)=\frac{\E_P[X_1\cdot{e}^{\beta(X_1,\vT)}\mid\vT]}{\E_P[W_1\mid\vT]}\quad P{\uph}\s(\vT)\mbox{-a.s.}.  
	\]

\end{symbs}

\begin{ex}
\normalfont
\label{countercla}
Assume  that the pair $(P,\vT)$ satisfies condition \eqref{scnpc}. The classes $\hyperlink{fptr}{{\F}^{ruin,\ell}_{P,\vT}}$ and $\hyperlink{mslr}{\mathcal{M}^{ruin,\ell}_{S,\mathbf{Exp}(\rho(\vT))}}$ are strict subclasses of $\hyperlink{fpt}{{\F}^{\ell}_{P,\vT}}$ and $\hyperlink{msla}{\mathcal{M}^{\ast,\ell}_{S,\mathbf{Exp}(\rho(\vT))}}$, respectively.\smallskip
	
In fact, let $D:=(0,\infty)$ and $r>0$. Consider the real-valued  function $\be:=\ga+\al$ on $(0,\infty)^2$, with $\ga(x):=-r\cdot x-\ln\E_P[e^{-rX_1}]$ for each $x>0$ and $\al(\theta):=\ln\E_{P_\theta}[e^{-rX_1}]$ for each $\theta>0$. A standard computation justifies that $\E_P\big[e^{\ga(X_1)}\big]=1$ and $\E_P\big[X_1^\ell\cdot e^{\ga(X_1)}\big]<\infty$, implying that  $\be\in\hyperlink{fpt}{\F^\ell_{P,\vT}}$. However, $\be\notin\hyperlink{fptr}{\F^{ruin,\ell}_{P,\vT}}$ as
\[
\frac{\E_P[X_1\cdot{e}^{\beta(X_1,\vT)}\mid\vT]}{\E_P[W_1\mid\vT]}=\frac{\E_P[X_1\cdot{e}^{-r\cdot X_1}\mid\vT]}{\E_P[W_1\mid\vT]}<\frac{\E_P[X_1]}{\E_P[W_1\mid\vT]}<c(\vT)\quad P{\uph}\s(\vT)\text{-a.s.,}
\]
where the second inequality follows by condition \eqref{scnpc}. \smallskip

Let now  $\xi\in\hyperlink{r+da}{\mathcal{R}^{\ast,\ell}_{+}(D)}$.  Since $(\be,\xi)\in\hyperlink{fpt}{{\F}^{\ell}_{P,\vT}}\times\hyperlink{r+da}{\mathcal{R}^{\ast,\ell}_{+}(D)}$, we may apply Theorem  \ref{class}(ii) to obtain  a unique pair $(\rho,Q)\in\hyperlink{mkd}{\mf{M}_+(D)}\times\hyperlink{msl}{\M^{\ast,\ell}_{S,{\bf Exp}(\rho(\vT))}}$ determined  by  conditions \eqref{ast} and \eqref{martPP}, so that  conditions   \eqref{rndx}  and \eqref{rnd} hold. However $Q\notin\hyperlink{mslr}{\M^{ruin,\ell}_{S,{\bf Exp}(\rho(\vT))}}$ as
\[
p(Q,\vT) =\Ttheta\cdot \E_Q[X_1]=\frac{e^{\al(\vT)}\cdot \E_P[X_1\cdot e^{\ga(X_1)}\mid\vT]}{\E_P[W_1\mid\vT]} <\frac{\E_P[X_1]}{\E_P[W_1\mid\vT]}<c(\vT)\quad P{\uph}\s(\vT)\text{-a.s.,}
\]
 where the second equality follows by conditions (a2) and \eqref{ast} and  the second inequality follows again by condition \eqref{scnpc}.  The same fact remains true under the weaker condition $\big(\hyperlink{cnpc}{\text{cnpc}(P,\vT)}\big)$.
\end{ex}

\begin{rem}
	\label{dispd2} 
	\normalfont 
Let $(\rho,Q)\in\hyperlink{mkd}{\mathfrak{M}_+(D)}\times\hyperlink{msla}{\M^{\ast,\ell}_{S,{\bf Exp}(\rho(\vT))}}$  and let $\{Q_\theta\}_{\theta\in{D}}$ be the  rcp of $Q$ over $Q_\vT$ consistent with $\vT$ appearing in Theorem \ref{class}(iii). The following statements are equivalent:
	\begin{enumerate}
		\item
		$(\hyperlink{npcpteq}{\mbox{a.s.ruin}(Q,\vT)_{\mbox{eq}}})$  holds;
		\item
		there exists a $P_\vT$-null set $M_{Q,eq}\in\B(D)$, containing the $P_\vT$-null sets $\wt{L}_{\ast\ast}$ and $W_{Q,1}$ 
		appearing in Theorem \ref{class} and Remark \ref{dispd}, respectively, such that for any $\theta\notin{M}_{Q,eq}$, the  measure $Q_\theta\in\hyperlink{mstla}{\M^{\ast,\ell}_{S,{\bf Exp}(\rho(\theta))}}$ satisfies condition $(\hyperlink{npcptheq}{\mbox{a.s.ruin}(Q_\theta)_{\mbox{eq}}})$.
\end{enumerate} 

In fact, first note that by \cite{lm1v}, Lemma 3.5(i) there exists a $P_\vT$-null set $\wt{M}_Q\in\B(D)$ such that statement (i) holds if and only if   $c(\theta)=p(Q,\theta)$ for all $\theta\notin\wt{M}_Q$.  But, due to Theorem \ref{class}(iii) and Remark \ref{dispd}, there exist $P_\vT$-null sets  $\wt{L}_{\ast\ast}, W_{Q,1}\in\B(D)$, respectively, such that  $Q_\theta\in\hyperlink{mstla}{\M^{\ast,\ell}_{S,{\bf Exp}(\rho(\theta))}}$ and $p(Q,\theta)=p(Q_\theta)$  for all $\theta\notin \wt{L}_{\ast\ast} \cup{W}_{Q,1}$. Putting  $M_{Q,eq}:=\wt{M}_Q\cup \wt{L}_{\ast\ast}  \cup{W}_{Q,1}\in\B(D)$ we infer that $P_\vT(M_{Q,eq})=0$ and condition $(\hyperlink{npcptheq}{\mbox{a.s.ruin}(Q_\theta)_{\mbox{eq}}})$  holds for all $\theta\notin{M}_{Q,eq}$, i.e., (i)$\Leftrightarrow$(ii).  
\end{rem}

\begin{thm}\label{ruin1a}
If the pair $(P,\vT)$ satisfies condition \eqref{scnpc}, then the following hold true:
\begin{enumerate}
\item
for each pair $(\rho,Q)\in\hyperlink{mkd}{\mathfrak{M}_+(D)}\times\hyperlink{mslr}{\M^{ruin,\ell}_{S,{\bf Exp}(\rho(\vT))}}$ there exists an essentially unique pair $(\be,\xi)\in\hyperlink{fptr}{\F^{ruin,\ell}_{P,\vT}}\times\hyperlink{r+da}{\mathcal R^{\ast,\ell}_+(D)}$ satisfying conditions \eqref{rndx}, \eqref{rnd}, \eqref{ast} and \eqref{martPP},  so that  
 \begin{gather}
\label{asq3}
\tag{$\mbox{ruin}(P)$}	
\psi(u)= \int  \frac{1}{\xi(\vT)}\cdot   e^{-S_{T_u(\vT)}^{(\ga)}} \cdot \prod_{j=1}^{N_{T_u(\vT)}}\frac{d {\bf K}(\vT)}{d{\bf Exp}(\Ttheta)}(W_j)\,dQ\qquad (u\geq0);
\end{gather}
\item
conversely, for every pair $(\beta,\xi)\in\hyperlink{fptr}{\mathcal{F}^{ruin,\ell}_{P,\vT}}\times\hyperlink{r+da}{\mathcal{R}^{\ast,\ell}_{+}(D)}$ there exists a unique pair $(\rho,Q)\in\hyperlink{mkd}{\mathfrak{M}_+(D)}\times\hyperlink{mslr}{\M^{ruin,\ell}_{S,{\bf Exp}(\rho(\vT))}}$ determined by  conditions \eqref{ast}, \eqref{martPP} and satisfying conditions   \eqref{rndx}, \eqref{rnd}  and \eqref{asq3};
\item
in both cases (i) and (ii), there exist a an essentially unique  rcp $\Qtd$ of $Q$ over $Q_\vT$ consistent with $\vT$ and a $P_\vT$-null set $M_{\ast,Q}\in\B(D)$, containing the $P_\vT$-null set $\wt L_{\ast\ast}$ appearing in  Theorem \ref{class},  satisfying for any $\theta\notin{M}_{\ast,Q}$ conditions $Q_\theta\in\hyperlink{mstlr}{{\M}^{ruin,\ell}_{S,{\bf Exp}(\ttheta)}}$,  \eqref{rndx}, \eqref{*}, \eqref{rcp3}  
and
\begin{gather}
\label{asq2}			
\tag{$\mbox{ruin}(P_\theta)$}
\psi_\theta(u) = \int  e^{-S_{\tau_u(\theta)}^{(\ga)}} \cdot \prod_{j=1}^{N_{\tau_u(\theta)}}\frac{d {\bf K}(\theta)}{d{\bf Exp}(\ttheta)}(W_j)\,dQ_\theta\qquad  (u\geq 0).
\end{gather}
\end{enumerate}
	
In  particular, if conditions  $(\hyperlink{npcpteq}{\mbox{a.s.ruin}(Q,\vT)_{\mbox{eq}}})$ and $(\hyperlink{ncnpbceq}{\mbox{a.s.ruin}(\beta,\vT)_{\mbox{eq}}})$ hold in (i) and (ii), respectively, then $Q$ is a $\ell$-martingale measure for the reserve process $R^u(\vT)=u-V(\vT)$, and there exists a $P_\vT$-null set $\wt{M}_{\ast,Q}\in\B(D)$ containing $M_{\ast,Q}$ and the $P_\vT$-null set $M_{Q,eq}$ appearing in Remark \ref{dispd2}, such that for any $\theta\notin\wt{M}_{\ast,Q}$,  condition $(\hyperlink{npcptheq}{\mbox{a.s.ruin}(Q_\theta)_{\mbox{eq}}})$ holds, and $Q_\theta$ is a $\ell$-martingale measure for the reserve process $r^u(\theta)=u-V(\theta)$. If in addition, $\ell=2$ and $\theta\notin\wt{M}_{\ast,Q}$, then the reserve processes $R^u_{\mathbb T}(\vT):=\{R^u_t(\vT)\}_{t\in\mathbb T}$ and  $r^u_{\mathbb T}(\theta):=\{r^u_t(\theta)\}_{t\in\mathbb T}$ both satisfy the (NFLVR) property. 
\end{thm}

\begin{proof}
Fix on arbitrary $u\geq 0$. \smallskip
	
\noindent Ad (i): Since $\hyperlink{msl}{\mathcal{M}^{ruin,\ell}_{S,\mathbf{Exp}(\rho(\vT))}}\subseteq\hyperlink{msla}{\mathcal{M}^{\ast,\ell}_{S,\mathbf{Exp}(\rho(\vT))}}$, it follows by Theorem \ref{class}(i) that there exists an essentially unique pair $(\be,\xi)\in\hyperlink{fpt}{\F^\ell_{P,\vT}}\times\hyperlink{r+da}{\mathcal R^{\ast,\ell}_+(D)}$ satisfying conditions \eqref{rndx},  \eqref{rnd}, \eqref{ast} and \eqref{martPP},  so that the family $M^{(\be)}(\vT)$ is a $P$-a.s. positive martingale  in $\mathcal L^1(P)$.  The latter along with condition \eqref{ncnpc} yields $\beta\in\hyperlink{fptr}{\mathcal{F}^{ruin,\ell}_{P,\vT}}$.  Thus,  condition \eqref{ruinq},  together with condition \eqref{martPP} for $J_{T_u(\vT)}=0$,  and the fact that  ruin occurs $Q$-a.s. by  Lemma \ref{psi1},  yields condition  \eqref{asq3}. \smallskip

\noindent Ad (ii):  Since $(\be,\xi)\in\hyperlink{fpt}{\F^{\ell}_{P,\vT}}\times\hyperlink{r+da}{\mathcal R^{\ast,\ell}_+(D)}$, there exists  a unique pair $(\rho,Q)\in\hyperlink{mkd}{\mathfrak{M}_+(D)}\times\hyperlink{msla}{\M^{\ast,\ell}_{S,{\bf Exp}(\rho(\vT))}}$ determined by conditions \eqref{ast}, \eqref{martPP}  and satisfying  conditions   \eqref{rndx} and \eqref{rnd} by Theorem \ref{class}(ii). Thus, taking into account condition \eqref{ncnpbc} we obtain  \eqref{ncnpc}; hence $Q\in\hyperlink{mslr}{\mathcal{M}^{ruin,\ell}_{S,\mathbf{Exp}(\rho(\vT))}}$. Condition \eqref{asq3} follows as in (i). \smallskip
	
\noindent Ad (iii): In both cases (i) and (ii), according to Theorem \ref{class}(iii), there exist an essentially unique rcp $\Qtd$ of $Q$ over $Q_\vT$ consistent with $\vT$ and a $P_\vT$-null set $\wt L_{\ast\ast}\in\mf{B}(D)$ satisfying for each $\theta\notin\wt L_{\ast\ast}$  conditions $Q_\theta\in\hyperlink{mstla}{{\M}^{\ast,\ell}_{S,{\bf Exp}(\ttheta)}}$,  \eqref{rndx}, \eqref{*} and \eqref{rcp3}. Since condition \eqref{ncnpc}   holds, it follows as in Remark  \ref{dispd} that there exists a $P_\vT$-null set $W_{Q,1}\in\mf{B}(D)$ such that condition  \eqref{ncnpqt} is valid for each $\theta\notin W_{Q,1}$; hence $Q_\theta\in\hyperlink{mslr}{\M^{ruin,\ell}_{S,{\bf Exp}(\rho(\theta))}}$ for each $\theta\notin {M}_{\ast,Q}:=\wt L_{\ast\ast}\cup{W}_{Q,1}\in\B(D)$. Fix on arbitrary $\theta\notin{M}_{\ast,Q}$.  Condition \eqref{ncnpqt} along with \cite{Sc},  Corollary 7.1.4 yields that ruin occurs $Q_\theta$-a.s.. Taking into account condition \eqref{rcp3}, Lemma \ref{st}(ii) and the fact that ruin occurs $Q_\theta$-a.s., and using the arguments of the proof of assertion (i), we get condition \eqref{asq2}.  \smallskip
 
In  particular, if conditions $(\hyperlink{npcpteq}{\mbox{a.s.ruin}(Q,\vT)_{\mbox{eq}}})$ and $(\hyperlink{ncnpbceq}{\mbox{a.s.ruin}(\beta,\vT)_{\mbox{eq}}})$ hold in (i) and (ii), respectively, it follows by Theorem \ref{class}(i) and (ii), respectively, that $Q$ is a $\ell$-martingale measure for the process $V(\vT)$; hence for the reserve process $R^u(\vT)=u-V(\vT)$, while by Remark \ref{dispd2} there exists a $P_\vT$-null set $M_{Q,eq}\in\B(D)$ such that for any $\theta\notin{M}_{Q,eq}$  condition $(\hyperlink{npcptheq}{\mbox{a.s.ruin}(Q_\theta)_{\mbox{eq}}})$   holds, implying that we may apply Theorem \ref{class}(iii) in order to conclude that, for any $\theta\notin\wt{M}_{\ast,Q}:=M_{\ast,Q}\cup{M}_{Q,eq}\in\B(D)$,  the measure  $Q_\theta$ is a $\ell$-martingale measure for the process $V(\theta)$; hence for the reserve process $r^u(\theta)=u-V(\theta)$. If, in addition, $\ell=2$ and $\theta\notin\wt{M}_{\ast,Q}$, then by Theorem \ref{ftap}, the  reserve processes $R^u_{\mathbb T}(\vT)$ and  $r^u_{\mathbb T}(\theta)$ both satisfy the (NFLVR) property.
\end{proof}

In the next example, given $P\in\hyperlink{msla}{\M^{\ast,\ell}_{S,{\bf Exp}(\vT)}}$ such that $M_{X_1}(r):=\E_P\big[e^{r\cdot X_1}\big]<\infty$ for some $r>0$,  we obtain an explicit formula for the ruin probability under $P$ by applying Theorem \ref{ruin1a}. 

\begin{ex}
\label{ruinp} 
\normalfont
Let $D:=(0,\infty)$, $P\in\hyperlink{msla}{\M^{\ast,\ell}_{S,{\bf Exp}(\vT)}}$ such that the pair $(P,\vT)$ satisfies condition \eqref{scnpc},  and $r> 0$ so that $M_{X_1}(r)<\infty$. According to Remark \ref{dispd4}, condition \eqref{scnpc} holds if and only if there exists a $P_\vT$-null set $O_P\in\B(0,\infty)$  such that $P_\theta\in\hyperlink{mstla}{\M^{\ast,\ell}_{S,{\bf Exp}(\theta)}}$ and $c(\theta)>p(P_\theta)$ for any $\theta\notin{O}_P$. Fix on arbitrary $\theta\notin{O}_P$.\smallskip

Let $\kappa_{\theta}(r)$  be  the unique solution to the equation 
\begin{equation}\label{lem35a}
	\E_{P_\theta}\big[e^{r\cdot X_1}\big]\cdot \E_{P_\theta}\big[e^{-(\kappa_{\theta}(r)-c(\theta)\cdot{r})\cdot W_1}\big]=1,
\end{equation}
(such a solution exists by e.g. \cite{rss},  Lemma 11.5.1(a)),  define the function $\kappa:D\times\R_+\rightarrow\R$ by means of $\kappa(\theta,r):=\kappa_{\theta}(r)$ for any $(\theta,r)\in{D}\times\R_+$, 
and for fixed $r\geq 0$ denote by $\kappa_{\vT}(r)$ the random variable defined by  $\kappa_{\vT}(r)(\omega):=\kappa_{\vT(\omega)}(r)$ for any $\omega\in\vO$. Applying \cite{mt2}, Lemma 4.14, we get that  $\kappa_\vT(r)$ is the $P{\uph}\sigma(\vT)$-a.s. unique solution to the equation
\[
M_{X_1}(r)\cdot \E_P\bigl[e^{-\bigl(\kappa_{\vT}(r)+c(\vT)\cdot{r}\bigr)\cdot W_1}\mid\vT\bigr]=1\quad P{\uph}\sigma(\vT)\text{-a.s.}.
\]

Recall that  $(P_\theta)_{X_1}=P_{X_1}$ for all $\theta\notin{W}_P$ by Remark \ref{rem1aaa},  implying  $\E_{P_\theta}[e^{r\cdot{X}_1}]=\E_P[e^{r\cdot{X}_1}]$ for all $\theta\notin{O}_P\supseteq{W}_{P,1}\supseteq{W}_P$; hence condition \eqref{lem35a}  along with $P_\theta\in\hyperlink{mstla}{\M^{\ast,\ell}_{S,{\bf Exp}(\theta)}}$ yields  $\ka_\theta(r)=\theta\cdot\big(M_{X_1}(r)-1\big)-c(\theta)\cdot r$. Differentiation   with respect to $r$ yields
\begin{equation}
\ka^\prime_\theta(r)=\theta\cdot M^\prime_{X_1}(r)-c(\theta) 
\label{562}
\end{equation}
and
\[
\ka^{\prime\prime}_\theta(r)=\theta\cdot M^{\prime\prime}_{X_1}(r) 
\]
for all $r$ in a neighbourhood $[0,r_0)$ of $0$.  Since  $P(\{X_1>0\})=1$ by assumption, we get $\ka^{\prime\prime}_\theta(r)>0$ for all $r\in[0,r_0)$, implying that the function $\ka_\theta$  is strictly convex, or equivalently that $\ka_\theta^{\prime}$ is strictly increasing on on $[0,r_0)$, while condition \eqref{562} implies that   $\ka_\theta^{\prime}(0)<0$; hence there exists a unique number $r_1:=r_1(\theta)\in(0,r_0)$ so that $\ka^\prime_\theta(r_1)=0$ and $\ka^\prime_\theta(r)>0$ for all $r\in(r_1,r_0)$.
The latter condition is equivalent to condition $\ka^\prime_\vT(r)>0$ $P{\uph}\sigma(\vT)$-a.s. for any $r\in(r_1,r_0)$. Note that $\ka_\theta(r_1)=\min_{r\in(0,r_0)}\ka_\theta(r)$ and $\ka_\theta(r)>\ka_\theta(r_1)$ for any $r\in(0,r_0)$, since the restriction of $\ka_\theta$ to $[r_1,r_0)$ is strictly increasing (cf. e.g. \cite{scm},  pages 89-90).   \smallskip

Fix on $\wt{r}\in(r_1,r_0)$, consider a function $\xi\in\hyperlink{r+da}{\mathcal{R}^{\ast,\ell}_+(D)}$ and define the real-valued function $\beta$ on $(0,\infty)^2$ by means of $\beta(x,\theta):=\wt{r}\cdot{x}$ for any $(x,\theta)\in(0,\infty)^2$.  A standard computation justifies that $\be\in\hyperlink{fpt}{\F^{\ell}_{P,\vT}}$.\smallskip

Since $\ka^\prime_\vT(\wt{r})>0$ $P{\uph}\sigma(\vT)$-a.s., condition \eqref{562}, along with \cite{mt2}, Lemma 4.14,  yields  
\[
\vT\cdot \E_P\big[X_1e^{\beta(X_1,\vT)}\mid\vT\big]>c(\vT)\quad P{\uph}\sigma(\vT)\text{-a.s.},
\] 
implying that condition \eqref{ncnpbc} holds, and so $\be\in\hyperlink{fptr}{\F^{ruin, \ell}_{P,\vT}}$. Therefore, we may apply Theorem \ref{ruin1a}(ii) to get  a unique pair $(\rho,Q^{(\wt r)})\in\hyperlink{mkd}{\mathfrak{M}_+(D)}\times\hyperlink{mslr}{\M^{ruin,\ell}_{S,{\bf Exp}(\rho(\vT))}}$, determined by conditions  \eqref{ast} and \eqref{martPP} and satisfying  conditions \eqref{rndx}, \eqref{rnd} and  \eqref{asq3}; hence  for any $u\geq0$ we have
 \begin{align*}
	\psi(u) &=\int  \frac{1}{\xi(\vT)}\cdot\Big(\frac{\vT}{\Ttheta}\Big)^{N_{T_u(\vT)}}   e^{-\wt r\cdot S_{T_u(\vT)}+N_{T_u(\vT)}\cdot \ln{M}_{X_1}(\wt{r}) -\big(\vT-\Ttheta\big)\cdot\sum_{j=1}^{N_{T_u(\vT)}} W_j} \,dQ^{(\wt r)}\\
		&=\int  \frac{1}{\xi(\vT)}\cdot  e^{-\wt r\cdot S_{T_u(\vT)}  +\vT\cdot {T_u(\vT)}\cdot\big({M}_{X_1}(\wt{r})-1\big)} \,dQ^{(\wt r)}\\
		&=\E_{Q^{(\wt r)}}\bigg[\frac{e^{\wt r\cdot R^u_{T_u(\vT)}(\vT)+\ka_\vT(\wt r)\cdot {T_u(\vT)}}}{\xi(\vT)}\bigg]\cdot e^{-\wt r\cdot u},
\end{align*}
where the second one by condition \eqref{ast}. 
\end{ex}
 
One of the main drawbacks of the method used in Example \ref{ruinp} is the assumption that $M_{X_1}(r)$ exists for some $r>0$, since it excludes heavy-tailed distributions. In the following example we consider again $P\in\hyperlink{msla}{\M^{\ast,\ell}_{S,{\bf Exp}(\vT)}}$ and we demonstrate how Theorem \ref{ruin1a} can be used to cure such cases.
 
 \begin{ex}
 	\label{ruinpnmgf} 
 	\normalfont
Let $D:=(0,\infty)$, let $P\in\hyperlink{msla}{\M^{\ast,\ell}_{S,{\bf Exp}(\vT)}}$ with $P_{X_1}=\textbf{Par}(a,b)$ (cf., e.g., \cite{Sc}, page 180 for the definition of the Pareto distribution), where $a>1$ and $b>0$,  so that the pair $(P,\vT)$ satisfies condition \eqref{scnpc},  and let $z\in\mathfrak M_+(D)$ with $z(\vT)>c(\vT)$ $P$-a.s.. Consider the pair $(\be,\xi)\in \hyperlink{fpt}{{\F}^{\ell}_{P,\vT}}\times \hyperlink{r+da}{\mathcal{R}^{\ast,\ell}_+(D)}$ with  $\beta(x,\theta):=\ln z(\theta)-\ln\big(\theta\cdot \E_{P}[X_1]\big)$ for any $x,\theta>0$. Since
 	\[
 	\frac{\E_P[X_1\cdot{e}^{\beta(X_1,\vT)}\mid\vT]}{\E_P[W_1\mid\vT]}=\vT\cdot\frac{z(\vT)\cdot \E_{P}[X_1]}{\vT\cdot \E_{P}[X_1]} >c(\vT)\quad P{\uph}\s(\vT)\text{-a.s.,}
 	\]
 	we deduce that condition \eqref{ncnpbc} holds, and so $\be\in\hyperlink{fptr}{\F^{ruin, \ell}_{P,\vT}}$. Thus, we may apply Theorem \ref{ruin1a}(ii) to get  a unique pair $(\rho,Q^{(z)})\in\hyperlink{mkd}{\mathfrak{M}_+(D)}\times\hyperlink{mslr}{\M^{ruin,\ell}_{S,{\bf Exp}(\rho(\vT))}}$, determined by conditions  \eqref{ast} and \eqref{martPP} and satisfying  conditions \eqref{rndx}, \eqref{rnd} and  \eqref{asq3}; hence  for any $u\geq0$ we get
 	\begin{align*}
 \psi(u)&=\int  \frac{1}{\xi(\vT)}\cdot  \bigg(\frac{\vT\cdot\E_{P}[X_1]}{z(\vT)}\bigg)^{N_{T_u(\vT)}}\cdot  e^{-\frac{T_u(\vT)}{\E_{P}[X_1]}\cdot\big(\vT\cdot\E_{P}[X_1]-z(\vT)\big)} \,dQ^{(z)}\\
 &=\int  \frac{1}{\xi(\vT)}\cdot  \bigg(\frac{\vT\cdot b}{(a-1)\cdot z(\vT)}\bigg)^{N_{T_u(\vT)}}\cdot  e^{-\frac{(a-1)\cdot T_u(\vT)}{b}\cdot\big(\vT\cdot \frac{b}{a-1}-z(\vT)\big)} \,dQ^{(z)}.
 	\end{align*}
 \end{ex} 
 
 Note that the arguments appearing in the above example, remain true for any claim size distribution $P_{X_1}$ with finite expectations. 
 
\section{Mixed Premium Calculation Principles and Change of Measures}\label{ap}

In this section, we discuss implications of our results to the  computation of premium calculation principles in a model of an insurance market possessing the property of (NFLVR). In this context, the financial pricing of insurance (FPI for short) approach proposed by Delbaen \& Haezendonck \cite{dh} plays a key role.\smallskip

Let $T>0$. According to the  FPI approach   the liabilities of an insurance company over a fixed period of time $\T:=[0,T]$ can be represented as a price process   $U_\T:=\{U_t\}_{t\in\T}$  defined by means of $U_t:= p_t+S_t$  for any $t\in\T$, where $S_t$ represents the total amount of claims paid up to time $t$ and $p_t$ represents the total premium for the remaining risk $S_T-S_t$. Under the assumption  that the random behaviour of the price process $U_\T$ is described by the given probability measure $P$ on $\vS$, and  that the insurance market is liquid enough (see \cite{dh}, Section 1, for more details)  by applying  the Harrison-Kreps theory (see Harrison \& Kreps \cite{hk})   it follows that the existence of  a $2$-martingale measure $Q$ on $\vS$ for $U_\T$ which is equivalent to $P$ implies the elimination of arbitrage opportunities in the insurance market and vice versa (see \cite{dh}, Section 1). However, as the insurance market is not, in general, complete (see e.g. \cite{emme}, page 20,  or \cite{so}, Section 4) the measure $Q$ is not unique; hence the next step should be the selection of such a measure $Q$. \smallskip

Under the assumption 
\begin{gather}
\mbox{the aggregate process}\;\; S\;\; \mbox{is a}\;\; P\text{-CPP}(\theta_0,P_{X_1})\;\;\mbox{with}\;\; \theta_0>0, 
\tag{CPP}
\end{gather}
Delbaen and Haezendonck \cite{dh} were interested in all those measures $Q$  with linear premiums of the form
	\begin{gather}
		p_t=(T-t)\cdot p(Q)\quad\text{ for any }t\in\T,
		\tag{LP}
		\label{fpi1}
	\end{gather}
where  p(Q) is the premium density under $Q$. But as $U_\T$ is a martingale under $Q$ if and only if $U_\T-p_0=\{S_t-p(Q)\cdot{t}\}_{t\in\T}$ is so, Delbaen \& Haezendonck  faced the problem of characterizing all those risk-neutral measures $Q$ on $\vS$ under which the compensator of $S_\T$ is a linear deterministic function of $t\in\T$.	As the linearity of the premiums implies that $S_\T$ is a CPP under $Q$ and vice versa (see \cite{dh},  Section 1), the above problem is equivalent to the following one: \smallskip

{\em If $S$ is a CPP under $P$, then characterize all progressively equivalent to $P$ probability measures $Q$ on $\vS$ such that $S$ remains a CPP under $Q$.}\smallskip

Recall that under the FPI framework,   a {\bf premium calculation principle} (PCP for short) is a  probability measure $Q$ on  $\vS$ which is progressively equivalent to $P$, the process $S$ is a $Q$-CPP and $X_1\in\mathcal{L}^1(Q)$, see \cite{dh}, Definition 3.1.   If   the distribution $Q_\vT$   is not degenerate,  then the probability measure $Q\in\hyperlink{msla}{\M^{\ast,1}_{S,{\bf Exp}(\Ttheta)}}$ constructed in  Theorem \ref{class}(ii) fails to be a PCP. Nevertheless, by virtue of  Theorem \ref{class}(iii) there exists an essentially unique rcp $\{Q_\theta\}_{\theta\in D}$ of $Q$ over $Q_\vT$  consistent with $\vT$  such that for $P_\vT$-a.a. $\theta\in{D}$ the probability measures $Q_\theta$ are PCPs. Thus, it seems natural to call every probability measure $Q\in\hyperlink{msla}{\M^{\ast,1}_{S,{\bf Exp}(\rho(\vT))}}$ a {\bf mixed PCP}.   \smallskip

In order for a mixed PCP to provide a realistic and viable pricing framework it should give more weight to unfavourable events in a risk-averse environment, i.e., conditions 
	\begin{equation}
		p(P,\vT)<p(Q,\vT)<\infty\quad P{\uph}\sigma(\vT)\text{-a.s.}.
		\label{pmpcp1}
	\end{equation}
	and 
	\begin{equation}
		p(P)<p(Q)<\infty 
		\label{pmpcp2}
	\end{equation}
	must hold true.  
	
	\begin{rem}
		\label{dispd2a} 
		\normalfont 
Let $(\rho,Q)\in\hyperlink{mkd}{\mathfrak{M}_+(D)}\times\hyperlink{msla}{\M^{\ast,\ell}_{S,{\bf Exp}(\rho(\vT))}}$  and let $\{Q_\theta\}_{\theta\in{D}}$ be the  rcp of $Q$ over $Q_\vT$ consistent with $\vT$ appearing in Theorem  \ref{class}(iii). The following statements are equivalent:
		\begin{enumerate}
			\item
			$p(P,\vT)<p(Q,\vT)<\infty\quad P{\uph}\s(\vT)$-a.s.;
			\item
			there exists a $P_\vT$-null set $\wt M_{P,Q}\in\B(D)$ containing the $P_\vT$-null sets $\wt{L}_{\ast\ast}$ and $W_{P,1}\cup W_{Q,1}$ appearing in Theorem \ref{class}(iii) and Remark \ref{dispd}, respectively,  such that $Q_\theta\in\hyperlink{mstla}{\M^{\ast,\ell}_{S,{\bf Exp}(\rho(\theta))}}$ and $p(P_\theta)< p(Q_\theta)<\infty$ for any $\theta\notin \wt M_{P,Q}$.
		\end{enumerate}
		
In fact, first note that by \cite{lm1v}, Lemma 3.5(i), there exists a $P_\vT$-null set $M_{P,Q}\in\B(D)$ such that statement \rm{(i)} holds  if and only if  $p(P,\theta)<p(Q,\theta)<\infty$ for all $\theta\notin{M}_{P,Q}$. Putting $\wt M_{P,Q}:=\wt{L}_{\ast\ast}\cup{M}_{P,Q}\cup W_{P,1}\cup W_{Q,1}\in\mf{B}(D)$ and applying Theorem \ref{class}(iii) and Remark \ref{dispd} we infer that $p(P_\theta)<p(Q_\theta)<\infty$ for all $\theta\notin \wt M_{P,Q}$, i.e., (i)$\Leftrightarrow$(ii). 
	\end{rem}

However, the existence of a mixed PCP  does not, in general, guarantee the validity of conditions \eqref{pmpcp1} and \eqref{pmpcp2} as the next two examples demonstrate. In the first example, we construct a mixed PCP that does not satisfy condition  \eqref{pmpcp1}  and leads to a $P$-a.s. ruin.

\begin{ex}
	\label{counter1}
	\normalfont
Let $P\in\hyperlink{msla}{\M^{\ast,\ell}_{S,{\bf K}(\vT)}}$, $D:=(0,\infty)$ and $r>0$. Consider the pair $(\beta,\xi)\in\hyperlink{fpt}{\mathcal{F}^\ell_{P,\vT}}\times\hyperlink{r+da}{\mathcal{R}^{\ast,\ell}_{+}(D)}$ with  $\be(x,\theta):=-r\cdot x$ for all $x, \theta>0$.  Applying  Theorem \ref{class}(ii) we obtain  a unique pair $(\rho,Q)\in\hyperlink{mkd}{\mf{M}_+(D)}\times\hyperlink{msla}{\M^{\ast,\ell}_{S,{\bf Exp}(\rho(\vT))}}$ determined by conditions \eqref{ast} and \eqref{martPP}, satisfying conditions \eqref{rndx}, \eqref{rnd}, and so that $Q$ is an $\ell$-martingale measure for the  reserve process $R^u(\vT)$ with $c(\vT)=p(Q,\vT)=\frac{\E_P[X_1\cdot e^{-r\cdot X_1}\mid\vT]}{\E_P[W_1\mid\vT]}$ $\,\,P{\uph}\sigma(\vT)$-a.s.. However, for this particular choice of mixed PCP one has that $p(Q,\vT)<p(P,\vT)$ $\,P{\uph}\sigma(\vT)$-a.s. which according to Lemma \ref{psi1} leads to an a.s. ruin. 
\end{ex}

In the next example, we construct a mixed PCP for which condition \eqref{pmpcp1} holds but condition \eqref{pmpcp2} fails.

\begin{ex}
	\label{counter2}
	\normalfont
Let $D:=(0,\infty)$ and $P\in\hyperlink{msla}{\M^{\ast,\ell}_{S,{\bf Exp}(\vT)}}$ with $P_\vT={\bf Exp}(1)$. Consider the pair $(\beta,\xi)\in\hyperlink{fpt}{\mathcal{F}^\ell_{P,\vT}}\times\hyperlink{r+da}{\mathcal{R}^{\ast,\ell}_{+}(D)}$ with  $\be(x,\theta):=\ln2$ for all $x,\theta>0$ and $\xi(\theta):=4\cdot e^{-3\cdot \theta}$ for any $\theta>0$.  Applying Theorem \ref{class}(ii) we obtain  a unique pair $(\rho,Q)\in\hyperlink{mkd}{\mf{M}_+(D)}\times\hyperlink{msla}{\M^{\ast,\ell}_{S,{\bf Exp}(\rho(\vT))}}$ determined by conditions \eqref{ast} and \eqref{martPP}, satisfying conditions \eqref{rndx}, \eqref{rnd},  and so that $Q$ is an $\ell$-martingale measure for the reserve process $R^u(\vT)$  with $c(\vT)=p(Q,\vT)=2\cdot\vT\cdot\E_P[X_1]$ $\,P{\uph}\sigma(\vT)$-a.s..  Even though the conditional premium densities satisfy condition \eqref{pmpcp1}, the corresponding premium densities satisfy the converse inequality since $p(Q)=\frac{\E_P[X_1]}{2}$ and  $p(P)=\E_P[X_1]$. 
\end{ex}

 Examples \ref{counter1} and \ref{counter2} raise the question when a mixed PCP satisfies conditions \eqref{pmpcp1} and \eqref{pmpcp2}  or the implication \eqref{pmpcp1}$\Rightarrow$\eqref{pmpcp2}.  
 
 \begin{rem}\label{rem52} \normalfont 
 If $P\in\hyperlink{msla}{\M^{\ast,\ell}_{S,{\bf K}(\vT)}}$ such that the pair $(P,\vT)$ satisfies condition \eqref{scnpc}, then for every pair $(\be,\xi)\in \hyperlink{fptr}{{\F}^{ruin,\ell}_{P,\vT}}\times\hyperlink{r+da}{\mathcal R^{\ast,\ell}_+(D)}$, with $\be$ satisfying condition $(\hyperlink{npcpteq}{\mbox{a.s.ruin}(Q,\vT)_{\mbox{eq}}})$, there exists a unique pair  $(\rho,Q)\in\hyperlink{mkd}{\mf{M}_+(D)}\times\hyperlink{msla}{\M^{\ast,\ell}_{S,{\bf Exp}(\rho(\vT))}}$ so that condition \eqref{pmpcp1} holds true. In fact, this follows immediately by Theorem \ref{ruin1a}.
 \end{rem}

 In the next proposition we find  sufficient conditions for the validity of the implication  \eqref{pmpcp1}$\Rightarrow$\eqref{pmpcp2}.  To prove it, we need the following lemma, which is a consequence of Schmidt \cite{Sc1}, Theorem 2.2, but we write it exactly in the form needed for our purposes.

\begin{lem}  
\label{mon}
Let $(\vO,\vS,P)$ be an arbitrary probability space. If $Z:\vO\rightarrow\R$ is a random variable, $J\in\mf{B}$ a Borel set satisfying $P(\{Z\in{J}\})=1$, and $f,g:J\rightarrow\R$  monotonic functions of the same monotonicity which are either positive or for which $f(Z),g(Z),f(Z)\cdot{g}(Z)\in\mathcal L^{1}(P)$, then   	
\[
\E_P[f(Z)\cdot g(Z)]\geq\E_P[f(Z)]\cdot\E_P[g(Z)].
\]	
If the functions $f$, $g$ have different 
monotonicity, then the inequality continues to apply but in the opposite direction.	
\end{lem}

\begin{prop}
\label{mpcp}
Let $D:=(0,\infty)$, $P\in\M^{\ast,\ell}_{S,{\bf K}(\vT)}$, $(\rho,Q)\in\hyperlink{mkd}{\mathfrak{M}_+(D)}\times\hyperlink{msla}{\M^{\ast,\ell}_{S, {\bf \Lambda}(\rho(\vT))}}$ 
and let $\xi\in\hyperlink{r+da}{\mathcal{R}^{\ast,\ell}_{+}(D)}$, $\wt L_{\ast\ast}$, $\{Q_\theta\}_{\theta\in D}$ be as in Theorem \ref{class}. If 
for all $\theta\notin\wt L_{\ast\ast}$ the functions $\theta\mapsto\xi(\theta)$ and  $\theta\mapsto p(Q_\theta)$ 
are  monotonic of the same monotonicity, then
\begin{enumerate}
\item
$p(P_\theta)\leq p(Q_\theta)$ for $P_\vT$-a.a. $\theta>0$, implies $p(P)\leq p(Q)<\infty$. 
\item
$p(P_\theta)<p(Q_\theta)$ for $P_\vT$-a.a. $\theta>0$ implies $p(P)< p(Q)<\infty$.	
\end{enumerate}	
	
\end{prop}
\begin{proof}  
First note that given $P\in\hyperlink{msla}{\M^{\ast,\ell}_{S,{\bf K}(\vT)}}$ and  $(\rho,Q)\in\hyperlink{mkd}{\mathfrak{M}_+(D)}\times\hyperlink{msla}{\M^{\ast,\ell}_{S, {\bf \Lambda}(\rho(\vT))}}$ the existence of a function $\xi\in\hyperlink{r+da}{\mathcal{R}^{\ast,\ell}_{+}(D)}$ follows by Theorem \ref{class}, according to which there exists an essentially unique pair $(\beta,\xi)\in\hyperlink{fpt}{\mathcal{F}^\ell_{P,\vT}}\times\hyperlink{r+da}{\mathcal{R}^{\ast,\ell}_{+}(D)}$ satisfying among others condition \eqref{ast}.\smallskip

\noindent Ad (i):  Since  for all $\theta\notin \wt L_{\ast\ast}$   the functions $\theta\mapsto\xi(\theta)$ and  $\theta\mapsto p(Q_\theta)$  are monotonic of the same monotonicity, if  $p(P_\theta)\leq p(Q_\theta)$  for $P_\vT$-a.a. $\theta>0$  we get 
\begin{align*}
p(P)&=\E_{P_\vT}[p(P_\theta)]\leq\E_{P_\vT}[p(Q_\theta)]
		=\E_{Q_\vT}[p(Q_\theta)\cdot(\xi(\theta))^{-1}]\\
		&\leq \E_{Q_\vT}    
		[p(Q_\theta)]\cdot\E_{Q_\vT}[(\xi(\theta))^{-1}]
		=\E_{Q_\vT}[p(Q_\theta)]<\infty,	
\end{align*}	
where the second inequality follows by Lemma \ref{mon}, the last equality is a consequence of the fact that $\xi$ is a Radon-Nikodym derivative of $Q_\vT$ with respect to $P_\vT$, and the last inequality follows by   $p(Q_\bullet)\in\mathcal{L}^1(Q_\vT)$; hence $p(P)\leq p(Q)<\infty$.\smallskip

\noindent Ad (ii):
 If $p(P_\theta)<p(Q_\theta)$ for $P_\vT$-a.a. $\theta>0$, then  $\E_{P_\vT}[p(P_\theta)]<\E_{P_\vT}[p(Q_\theta)]$. \smallskip

The latter along with the arguments of the proof of (i) yields $p(P)=\E_{P_\vT}[p(P_\theta)]<\E_{P_\vT}[p(Q_\theta)]={p}(Q)<\infty$. This completes the proof.
\end{proof}

\section{Examples}\label{ex}

In this section, applying our results, we provide some examples to show how to construct mixed PCPs $Q$ satisfying conditions  \eqref{pmpcp1} and \eqref{pmpcp2} and such that for any $T>0$ the processes  $V_{\mathbb T}(\vT)$  and $R_{\mathbb T}^u(\vT)$, for $u>0$, have the property
of (NFLVR). Moreover,  we provide explicit formulas for the ruin probability for the reserve process $R^u(\vT)$ with respect to the original measure $P$.

\begin{ex}\label{expcp1}
\normalfont
Take $D:=(1,\infty)^2$, and let $\vT=(\vT_1,\vT_2)$ be a $D$-valued random vector on $\vO$ with $\vT_1, \vT_2\in\mathcal L^1(P)$.  Moreover, assume that $P\in\hyperlink{msla}{\M^{\ast,2}_{S,{\bf K}(\vT)}}$ with 
	\[
	{\bf K}(\vT):=\frac{1}{2}\cdot{\bf Exp}(1/\vT_1)+\frac{1}{2}\cdot{\bf Exp}(1/\vT_2),
	\]
	and $P_{X_1}={\bf Ga}(\zeta,2)$ for $\zeta>0$.  \smallskip
	
	Consider the real-valued function $\be:=\ga+\al$ on $(0,\infty)\times D$,  with $\ga(x):=\ln\frac{\E_P[X_1]}{2c}-\ln x+\frac{2(c-1)}{c\E_P[X_1]}\cdot x$ for any $x>0$ with $c>2$ a  constant, and $\al(\theta):=0$ for any $\theta\in D$. Applying standard computations, we obtain that  $\E_P[e^{\ga(X_1)}]=1$ and $\E_P[X^2_1\cdot e^{\ga(X_1)}]=\frac{2c^2}{\zeta}<\infty$, implying  $\be\in\hyperlink{fpt}{\F^{2}_{P,\vT}}$. \smallskip

Let  $\xi\in\hyperlink{mkd}{\mathfrak M_+(D)}$ defined by means of $\xi(\theta):=\xi(\theta_1,\theta_2):=1$ for any $\theta\in D$. Clearly $\E_P[\xi(\vT)]=1$, implying $\xi\in\hyperlink{r+d}{\mathcal R_+(D)}$, while $P\in\mathcal{M}^{\ast,\ell}_{S,{\bf K}(\vT)}$ yields 
\[
\E_P\left[\left(\frac{2}{\vT_1+\vT_2}\right)^2\cdot\xi(\vT)\right]<\infty,
\] 
and so $\xi\in\hyperlink{r+da}{\mathcal R^{\ast,2}(D)}$.\smallskip
	
\noindent {\bf (a)}	Since $(\be,\xi)\in\hyperlink{fpt}{\F^2_{P,\vT}}\times\hyperlink{r+da}{\mathcal R^{\ast,2}(D)}$,  by  Theorem  \ref{class}   there exist  a unique pair $(\rho,Q)\in\hyperlink{mkd}{\mathfrak M_+(D)}\times \hyperlink{msla}{\M^{*,2}_{S,{\bf Exp}(\Ttheta)}}$ determined by conditions \eqref{ast} and \eqref{martPP}, such that conditions  \eqref{rndx}  and \eqref{rnd} are valid, an essentially unique rcp $\{Q_\theta\}_{\theta\in D}$ of $Q$ over $Q_\vT$ consistent with $\vT$ and a $P_\vT$-null set $\wt L_{\ast\ast}\in\B(D)$ such that for any  $\theta\notin \wt L_{\ast\ast}$ conditions $Q_\theta\in\hyperlink{mstla}{{\M}^{\ast,2}_{S,{\bf Exp}(\ttheta)}}$, \eqref{rndx}, \eqref{*} and \eqref{rcp3} hold true.  It then follows by conditions \eqref{ast} and \eqref{rndx} that $\rho(\vT)=\frac{2}{\vT_1+\vT_2}$ $P{\uph}\sigma(\vT)$-a.s. and $Q_{X_1}={\bf Exp}(\zeta/c)$, respectively,  while  condition \eqref{rnd} yields
\begin{equation}
	Q_\vT(B)=\E_P[\1_{\vT^{-1}[B]}\cdot\xi(\vT)]=P_\vT(B)\quad\text{for any } B\in\B(D).
\label{611}
\end{equation}
Thus, for any $\theta\notin \wt L_{\ast\ast}$ the probability measure $Q_\theta$ is a PCP satisfying condition
\begin{equation}\label{eq14}
p(P_\theta)=\frac{4}{\zeta\cdot(\theta_1+\theta_2)}<\frac{2\cdot c}{\zeta\cdot(\theta_1+\theta_2)}=p(Q_\theta)<\infty;
\end{equation}
hence condition \eqref{pmpcp1} holds by Remark \ref{dispd2a}. Conditions  \eqref{611} and \eqref{eq14} imply   condition \eqref{pmpcp2}. \smallskip
	
\noindent  {\bf (b)} Again by Theorem \ref{class}, the   measure $Q$ is a $2$-martingale measure for the process $V(\vT)$ with 
\begin{equation}
V_t(\vT)=S_t-t\cdot\frac{2\cdot c}{\zeta\cdot(\vT_1+\vT_2)}\quad\text{for any } t\geq 0,
\label{11}	
\end{equation}
and for any $\theta\notin\wt L_{\ast\ast}$ the probability measure $Q_\theta$ is a $2$-martingale measure for the process $V(\theta)$ with $V_t(\theta)=S_t-t\cdot\frac{2\cdot c}{\zeta\cdot(\theta_1+\theta_2)}$ for any $t\geq 0$.  In particular, for any $T>0$, Theorem \ref{ftap} 	asserts that both processes $V_{\mathbb T}(\vT)$ and $V_{\mathbb T}(\theta)$ satisfy condition (NFLVR). \smallskip

\noindent {\bf (c)} Consider the reserve process $R^u(\vT):=u-V(\vT)$ ($u>0$). The equality $c(\vT)=p(Q,\vT)$, together with (a), implies that condition  \eqref{scnpc} is valid and that $Q\in\hyperlink{mslr}{\M^{ruin,2}_{S,{\bf Exp}(\rho(\vT))}}$; hence by Theorem \ref{ruin1a} we get that $Q$ is a $2$-martingale measure for the reserve process $R^u(\vT)$,  ruin occurs $Q$-a.s. and   
\[
\psi(u)=\E_Q\big[C_1\big(N_{T_u(\vT)}, W,X,\vT\big)\big]\cdot e^{-\frac{\zeta(c-1)}{c}\cdot u},
\]
where 
\begin{align*}
C_1\big(N_{T_u(\vT)}, W,X,\vT\big) :=\Big(\prod_{j=1}^{N_{T_u(\vT)}} \frac{c\cdot\zeta}{\Ttheta}\cdot  X_j\Big)\cdot e^{ \frac{\zeta(c-1)}{c}\cdot R^u_{T_u(\vT)}(\vT) -\rho(\vT)\cdot(c-2)\cdot {T_u(\vT)} +\sum_{j=1}^{N_{T_u(\vT)}}\ln\Big(\frac{1}{2\vT_1}\cdot e^{-\frac{W_j}{\vT_1}}+\frac{1}{2\vT_2}\cdot e^{-\frac{W_j}{\vT_2}}\Big)}.
\end{align*}
The latter follows by the equalities
\[
S_{T_u(\vT)}^{(\ga)}=N_{T_u(\vT)}\cdot\ln\Big(\frac{\E_P[X_1]}{2c}\Big)-\sum_{j=1}^{N_{T_u(\vT)}}\ln X_j+\frac{2(c-1)}{c\E_P[X_1]}\cdot S_{T_u(\vT)}
\]
and
\[
\prod_{j=1}^{N_{T_u(\vT)}}\frac{d {\bf K}(\vT)}{d{\bf Exp}(\Ttheta)}(W_j)=e^{\sum_{j=1}^{N_{T_u(\vT)}}\ln\Big(\frac{1}{2\vT_1}\cdot e^{-\frac{W_j}{\vT_1}}+\frac{1}{2\vT_2}\cdot e^{-\frac{W_j}{\vT_2}}\Big) -N_{T_u(\vT)}\cdot \ln\Ttheta+\Ttheta\cdot{T(\vT)}}
\] 
along with conditions \eqref{asq3}, \eqref{11} and \eqref{ast}.  \smallskip
 
Moreover, again by Theorem \ref{ruin1a}, for any $\theta\notin
\wt{M}_{\ast,Q}$, the reserve processes $R^u_{\mathbb T}(\vT):=\{R^u_t(\vT)\}_{t\in\mathbb T}$ and  $r^u_{\mathbb T}(\theta):=\{r^u_t(\theta)\}_{t\in\mathbb T}$ both satisfy the (NFLVR) property.
\end{ex}

In our next example we rediscover  Shaun Wang's risk-adjusted premium principle (see \cite{wangs}, for the definition and its properties). 

\begin{ex}\label{expcp3}
	\normalfont
Let $D:=(0,\infty)$  and assume that $P\in\hyperlink{msla}{\M^{\ast,2}_{S,{\bf Exp}(\vT)}}$ such that $P_{X_1}$ is absolutely continuous with respect to the Lebesgue measure $\lambda$ on $\mf{B}$ restricted to $\B(0,\infty)$. Denote by   $\wb{F}_{X_1}(x):=P_{X_1}\left((x,\infty)\right)$ for any $x>0$ the corresponding  survival function of the random variable $X_1$, and  assume that $\int_0^\infty x\cdot \left(\wb{F}_{X_1}(x)\right)^{\frac{1}{c}}\,\lambda(dx)<\infty$, where $c>1$ is a constant. Recall that the {\bf risk adjusted premium} for  $X_1$ is defined by means of
	\[
	\pi_c(X_1):=\int_0^\infty \left(\wb{F}_{X_1}(x)\right)^{\frac{1}{c}}\,\lambda(dx)\quad\text{ for any }\,\,c\geq 1.
	\]   
(see \cite{wangs}, Definition 2).\smallskip
	
Consider the real-valued function $\be:=\ga+\al$ on $(0,\infty)^2$, with $\ga(x):=-\ln c+(\frac{1}{c}-1)\cdot \ln \wb{F}_{X_1}(x)$ for any $x>0$ and  $\al(\theta):=0$ for any $\theta>0$.  By standard computations,  get   $\E_P[e^{\ga(X_1)}]=1$ and $\E_P[X^2_1\cdot e^{\ga(X_1)}]=2\cdot\int_0^\infty x\cdot \left(\wb{F}_{X_1}(x)\right)^{\frac{1}{c}} \,\lambda(dx)<\infty$, implying  $\pi_c(X_1)<\infty$ and  $\be\in\hyperlink{fpt}{\F^{2}_{P,\vT}}$.\smallskip
	
For any $r\in{E}_\vT:=\{\wt{r}\geq{0}: M_\vT(\wt{r}):=\E_P[e^{\wt{r}\cdot\vT}]<\infty\}$, where by $M_\vT$ is denoted the moment generating function of $\vT$, define the function $\xi\in\hyperlink{mkd}{\mathfrak M_+(D)}$  by means of  $\xi(\theta):=\frac{e^{r\cdot\theta}}{M_\vT(r)}$ for any $\theta>0$. Clearly $\E_P[\xi(\vT)]=1$, implying that $\xi\in\hyperlink{r+d}{\mathcal R_+(D)}$. But since  $P\in\hyperlink{msla}{\M^{\ast,2}_{S,{\bf Exp}(\vT)}}$,  get  $\E_P[\vT^2]<\infty$, implying $M_\vT^{\prime\prime}(r)<\infty$  for all $r$ in a neighbourhood of $0$ in $E_\vT$; hence 
\[
\E_P[\vT^2\cdot\xi(\vT)]=\frac{\E_{P}[\vT^2\cdot e^{r\cdot\vT}]}{M_\vT(r)}=\frac{M_\vT^{\prime\prime}(r)}{M_\vT(r)}<\infty,
\]
implying  $\xi\in\hyperlink{r+da}{\mathcal R^{\ast,2}_+(D)}$.\smallskip
	
\noindent	{\bf (a)}	Since $(\be,\xi)\in \hyperlink{fpt}{\F^{2}_{P,\vT}}\times\hyperlink{r+da}{\mathcal R^{\ast,2}_+(D)}$, we may apply Theorem \ref{class} in order to get  a unique pair $(\rho,Q)\in\hyperlink{mkd}{\mathfrak{M}_+(D)}\times\hyperlink{msla}{\M^{\ast,2}_{S,{\bf Exp}(\rho(\vT))}}$ determined by  conditions \eqref{ast} and  \eqref{martPP}, so that conditions  \eqref{rndx}  and \eqref{rnd} hold, an essentially unique rcp $\{Q_\theta\}_{\theta>0}$ of $Q$ over $Q_\vT$ consistent with $\vT$ and a $P_\vT$-null set $\wt L_{\ast\ast}\in\B(0,\infty)$ satisfying for any $\theta\notin \wt L_{\ast\ast}$ conditions $Q_\theta\in\hyperlink{mstla}{{\M}^{\ast,2}_{S,{\bf Exp}(\ttheta)}}$, \eqref{rndx}, \eqref{*} and \eqref{rcp3}.  It then follows by conditions \eqref{ast} and \eqref{rndx} that  $\rho(\vT)=\vT$ $P{\uph}\s(\vT)$-$\text{a.s.}$ and     
	\[ 
	Q_{X_1}(A)=\E_P[\1_{X^{-1}_1[A]}\cdot e^{\ga(X_1)}]=\int_{A} \frac{1}{c}\cdot  \big(\wb{F}(x)\big)^{\frac{1}{c}-1}\,P_{X_1}(dx)\quad\text{for any } A\in\B(0,\infty),
	\]
respectively, while by condition \eqref{rnd}  for any $B\in\B(0,\infty)$  we get	
	\begin{equation}\label{eq15a}	
	Q_\vT(B)=\E_P[\1_{\vT^{-1}[B]}\cdot\xi(\vT)]=\int_{B} \frac{e^{r\cdot\theta}}{M_\vT(r)}\,P_\vT(d\theta)\quad\text{for } r\in{E}_\vT;
	\end{equation}
hence for any $\theta\notin \wt L_{\ast\ast}$ the corresponding measure $Q_\theta$ is a PCP satisfying condition 
\[
p(P_\theta)=\theta\cdot\int_0^\infty \wb{F}(x) \,\lambda(dx)<\theta\cdot\int_0^\infty \left(\wb{F}(x)\right)^{\frac{1}{c}}\,\lambda(dx)=\theta\cdot\pi_c(X_1)=p(Q_\theta)<\infty,
\]
implying  condition \eqref{pmpcp1} by  Remark \ref{dispd2a}. Since for all $\theta\notin\wt L_{\ast\ast}$ the functions $\theta\mapsto\xi(\theta)$ and $\theta\mapsto p(Q_\theta)$ are monotonic   of the same monotonicity, we may apply Proposition \ref{mpcp}(ii) to conclude condition \eqref{pmpcp2}  with
\[
p(Q)=\E_{Q_\vT}[p(Q_\theta)]=\pi_c(X_1)\cdot\E_{Q_\vT}[\theta]=\pi_c(X_1)\cdot\frac{M^\prime_\vT(r)}{M_\vT(r)} 
\]
for all $r$ in a neighbourhood of $0$ in $E_\vT$, where the last equality follows by condition \eqref{eq15a}.  	 \smallskip
	
\noindent {\bf (b)} Again by  Theorem \ref{class}, the probability measure $Q$ is a $2$-martingale measure for the process $V(\vT)$ with $ V_t(\vT)=S_t-t\cdot \vT\cdot\pi_c(X_1)$ for any $t\geq 0$, and for any $\theta\notin \wt L_{\ast\ast}$ the probability measure  $Q_\theta$ is a $2$-martingale measure for the process $V(\theta)$ with $V_t(\theta)=S_t-t\cdot \theta\cdot\pi_c(X_1)$ 	for any $t\geq 0$. In particular, for any $T>0$, Theorem \ref{ftap} 	asserts that both processes $V_{\mathbb T}(\vT)$ and $V_{\mathbb T}(\theta)$ satisfy condition (NFLVR).\smallskip
	
\noindent  {\bf (c)} Consider the reserve process $R^u(\vT):=u-V(\vT)$ ($u>0$). The equality $c(\vT)=p(Q,\vT)$, together with (a), implies that condition  \eqref{scnpc} is valid and that $Q\in\hyperlink{mslr}{\M^{ruin,2}_{S,{\bf Exp}(\rho(\vT))}}$; hence by Theorem \ref{ruin1a}, we get that $Q$ is a $2$-martingale measure for the reserve process $R^u(\vT)$,    ruin occurs $Q$-a.s. and  condition \eqref{asq3} holds true. Condition \eqref{asq3}, along with $S_{T_u(\vT)}^{(\gamma)}=-N_{T_u(\vT)}\cdot\ln c+(\frac{1}{c}-1)\cdot\sum_{j=1}^{N_{T_u(\vT)}}\ln \wb F_{X_1}(X_j)$, $\prod_{j=1}^{N_{T_u(\vT)}}\frac{d {\bf K}(\vT)}{d{\bf Exp}(\Ttheta)}(W_j)=1$ and standard computations, yields  
\[
\psi(u)=M_\vT(r)\cdot \E_Q\big[e^{-r\cdot\vT+N_{T_u(\vT)}\cdot\ln c-(\frac{1}{c}-1)\cdot\sum_{j=1}^{N_{T_u(\vT)}}\ln \wb{F}_{X_1}(X_j)}\big].
\]

Moreover, again by Theorem \ref{ruin1a}, for any $\theta\notin\wt{M}_{\ast,Q}$ the reserve processes $R^u_{\mathbb T}(\vT):=\{R^u_t(\vT)\}_{t\in\mathbb T}$ and  $r^u_{\mathbb T}(\theta):=\{r^u_t(\theta)\}_{t\in\mathbb T}$ both satisfy the (NFLVR) property.\smallskip

 In particular, if $c=2$ and $P_{X_1}=\textbf{Par}(2,5)$, then the latter formula for the ruin probability becomes
\[
\psi(u)=M_\vT(r)\cdot \E_Q\Big[e^{-r\cdot\vT}\cdot\prod_{j=1}^{N_{T_u(\vT)}}\frac{2^{\frac{7}{2}}}{(2+X_j)^{5/2}}\Big].
\] 

\end{ex}

\begin{ex}\label{exruin}
	\normalfont
Let $D:=(0,\infty)$ and  $P\in\hyperlink{msla}{\M^{\ast,2}_{S,{\bf Exp}(1/\vT)}}$.  Put $E_{X_1}:=\{r\geq{0}: M_{X_1}(r)<\infty\}$ and  define the real-valued function $\be:=\ga+\al$ on $(0,\infty)^2$ by means of  $\ga(x):=r\cdot x-\ln{M}_{X_1}(r)$ for  $r\in{E}_{X_1}$ and  for any $x>0$, and $\al(\theta):=0$ for $\theta>0$. By standard computations, get $\E_P[e^{\ga(X_1)}]=1$ and
\[
\E_P[X^2_1\cdot e^{\ga(X_1)}]=\frac{\E_{P}[X_1^2\cdot e^{r\cdot X_1}]}{\E_{P}[e^{r\cdot X_1}]}=\frac{M_{X_1}^{\prime\prime}(r)}{M_{X_1}(r)}<\infty,	
\]
since $M_{X_1}^{\prime\prime}(r)<\infty$ for all $r$ in a neighbourhood of $0$  in $E_{X_1}$; hence   $\be\in\hyperlink{fpt}{\F^{2}_{P,\vT}}$. \smallskip
	
Define the function  $\xi\in\hyperlink{mkd}{\mathfrak M_+(D)}$ by means of $\xi(\theta):=\frac{e^{-r\cdot\theta}}{\E_{P}[e^{-r\cdot\vT}]}$ for $r\in{E}_{X_1}$ and for any $\theta>0$.     Clearly $\E_P[\xi(\vT)]=1$, implying $\xi\in\hyperlink{r+d}{\mathcal R_+(D)}$, and  
\[
\E_P\left[\left(\frac{1}{\vT}\right)^2\cdot\xi(\vT)\right]=\frac{\E_P\left[\frac{1}{\vT^2} \cdot e^{-r\cdot\vT}\right]}{\E_{P}[e^{-r\cdot\vT}]}\leq \frac{\E_P\left[\frac{1}{\vT^2} \right]}{\E_{P}[e^{-r\cdot\vT}]}<\infty,
\]
where the last inequality follows  by $P\in\hyperlink{msla}{\M^{\ast,2}_{S,{\bf Exp}(1/\vT)}}$; hence  $\xi\in\hyperlink{r+da}{\mathcal R^{\ast,2}(D)}$.\smallskip

\noindent \textbf{(a)} Since $(\be,\xi)\in\hyperlink{fpt}{\F^2_{P,\vT}}\times\hyperlink{r+da}{\mathcal R_+^{\ast,2}(D)}$, applying  Theorem \ref{class}  we obtain a unique pair $(\rho,Q)\in\hyperlink{mkd}{\mathfrak M_+(D)}\times \hyperlink{msla}{\M^{*,2}_{S,{\bf Exp}(\Ttheta)}}$ determined by conditions \eqref{ast} and \eqref{martPP}, such that conditions  \eqref{rndx}  and \eqref{rnd} are valid, an essentially unique rcp $\{Q_\theta\}_{\theta>0}$ of $Q$ over $Q_\vT$ consistent with $\vT$, and a $P_\vT$-null set $\wt L_{\ast\ast}\in\B(0,\infty)$ such that for any  $\theta\notin \wt L_{\ast\ast}$ conditions $Q_\theta\in\hyperlink{mstla}{{\M}^{\ast,2}_{S,{\bf Exp}(\ttheta)}}$, \eqref{rndx}, \eqref{*} and \eqref{rcp3} hold true.  It then follows by It then follows by conditions \eqref{ast} and \eqref{rndx} that $\rho(\vT)=\frac{1}{\vT}$ $P{\uph}\sigma(\vT)$-a.s. and 
\[
Q_{X_1}(A)=\E_P[\1_{X^{-1}_1 [A]}\cdot e^{\ga(X_1)}]=\frac{\E_P[\1_{X^{-1}_1 [A]}\cdot e^{r\cdot X_1}]}{M_{X_1}(r)}\quad\text{for }  r\in{E}_{X_1}, 
\]
for any $A\in\B(0,\infty)$, respectively, while by condition \eqref{rnd}  for any $B\in\B(0,\infty)$  we get 
\begin{equation}\label{intxia}
Q_\vT(B)=\E_P[\1_{\vT^{-1}[B]}\cdot\xi(\vT)]=\frac{\E_P[\1_{\vT^{-1}_1 [B]}\cdot e^{-r\cdot \vT}]}{\E_P[e^{-r\cdot \vT}]}\quad\text{for } r\in{E}_{X_1}.
\end{equation}
Thus, for any $\theta\notin \wt L_{\ast\ast}$ the probability measure $Q_\theta$ is a PCP satisfying condition
\begin{equation}\label{11a}
p(P_\theta)=\frac{\E_{P}[X_1]}{\theta} < \frac{M^\prime_{X_1}(r)}{\theta\cdot  M_{X_1}(r)}=p(Q_\theta)<\infty,
\end{equation}
for $r$ i a neighbourhood of $0$ in $E_{X_1}$. The  inequalities hold  true, since for the function $f:E_{X_1}\rightarrow\R$ defined by means of 
$f(r):=\ln{M}_{X_1}(r)$ for all $r\in{E}_{X_1}$,  we have  $f^{\prime\prime}(r)>0$ for any $r$ in a neighbourhood $[0,r_0)$ of $0$, or equivalently $f$ is strictly convex on $[0,r_0)$, or equivalently the function $f^\prime$ with $f^\prime(r)=\frac{M_{X_1}^\prime(r)}{M_{X_1}(r)}<\infty$ for $r\in[0,r_0)$ is strictly increasing; hence $\E_P[X_1]<f^\prime(r)<\infty$ for all $r\in(0,r_0)$. \smallskip

Consequently,  condition \eqref{11a}, together Remark \ref{dispd2a}, yields condition \eqref{pmpcp1}. Since for any  $\theta\notin\wt{L}_{\ast\ast}$ the functions $\theta\mapsto\xi(\theta)$ and  $\theta\mapsto p(Q_\theta)$ are monotonic of the same monotonicity, we may apply Proposition \ref{mpcp}(ii) in order to conclude condition \eqref{pmpcp2}  with
\[
p(Q)=\E_{Q_\vT}[p(Q_\theta)]=\E_{Q_\vT}\Big[\frac{1}{\theta}\Big]\cdot\frac{M_{X_1}^\prime(r)}{M_{X_1}(r)}=\frac{\E_P\left[\frac{1}{\vT} \cdot e^{-r\cdot\vT}\right]}{\E_{P}[e^{-r\cdot\vT}]}\cdot\frac{M_{X_1}^\prime(r)}{M_{X_1}(r)},
\] 
where the last equality follows by condition \eqref{intxia}.   \smallskip

\noindent {\bf (b)}  Again by Theorem \ref{class} the probability measure $Q$ is a $2$-martingale measure for the process $V(\vT)$ with $V_t(\vT)=S_t-t\cdot \frac{1}{\vT}\cdot\frac{M_{X_1}^\prime(r)}{M_{X_1}(r)}$ for any $t\geq 0$, and  for any $\theta\notin\wt L_{\ast\ast}$ the probability measure $Q_\theta$ is a $2$-martingale measure for the process $V(\theta)$ with  $V_t(\theta)=S_t-t\cdot \frac{1}{\theta}\cdot\frac{M_{X_1}^\prime(r)}{M_{X_1}(r)}$ for any  $t\geq 0$.  In particular, for any $T>0$, Theorem \ref{ftap} 	asserts that both processes $V_{\mathbb T}(\vT)$ and $V_{\mathbb T}(\theta)$ satisfy condition (NFLVR). \smallskip

\noindent {\bf (c)} Consider the reserve process $R^u(\vT):=u-V(\vT)$ ($u>0$). The equality $c(\vT)=p(Q,\vT)$, together with (a), implies that condition  \eqref{scnpc} is valid and that $Q\in\hyperlink{mslr}{\M^{ruin,2}_{S,{\bf Exp}(\rho(\vT))}}$; hence by Theorem \ref{ruin1a} we get that $Q$ is a $2$-martingale measure for the reserve process $R^u(\vT)$,  ruin occurs $Q$-a.s. and
\[
\psi(u)=\E_P[e^{-r\cdot \vT}]\cdot \E_Q\Big[e^{r\cdot R^u_{T_u(\vT)}(\vT)+r\cdot\vT -  \frac{r\cdot{T_u(\vT)}}{\vT}\cdot\frac{M_{X_1}^\prime(r)}{M_{X_1}(r)}  +N_{T_u(\vT)}\cdot\ln{M}_{X_1}(r)}\Big]\cdot e^{-r\cdot u},
\]
where the latter follows by    \eqref{asq3} along with  
\[
S_{T_u(\vT)}^{(\ga)}=r\cdot S_{T_u(\vT)}-N_{T_u(\vT)}\cdot\ln\E_P[e^{r\cdot X_1}]\quad\text {and}\quad  \prod_{j=1}^{N_{T_u(\vT)}}\frac{d {\bf K}(\vT)}{d{\bf Exp}(\Ttheta)}(W_j)=1.
\]

Moreover, again by Theorem \ref{ruin1a}, for any $\theta\notin\wt{M}_{\ast,Q}$, the reserve processes $R^u_{\mathbb T}(\vT):=\{R^u_t(\vT)\}_{t\in\mathbb T}$ and  $r^u_{\mathbb T}(\theta):=\{r^u_t(\theta)\}_{t\in\mathbb T}$ both satisfy the (NFLVR) property.
 \end{ex}

\begin{ex}\label{expcp2}
	\normalfont
Assume that $D:=(0,\infty)$ and $P\in\hyperlink{msla}{\M^{\ast,2}_{S,{\bf Ga}(\vT,k)}}$ for  $k>0$, such that $P_{X_1}={\bf Exp}(\eta)$ with $\eta>0$ and $P_\vT={\bf Ga}(b_1,a)$ with  $b_1,a>0$. \smallskip
	
Consider the real-valued function $\be:=\ga+\al$ on $(0,\infty)^2$, with $\ga(x):=\ln(1-c\cdot\E_P[X_1])+c\cdot x$ for any $x>0$ with $c<\eta$ a positive constant, and  $\al(\theta):=\ln\big(\frac{\theta}{b}\cdot\E_{P_\theta}[W_1]\big)$ for any $\theta>0$, where $b<k$ is a positive constant. It can be easily seen that  $\E_P[e^{\ga(X_1)}]=1$ and $\E_P[X^2_1\cdot e^{\ga(X_1)}]=\frac{2}{(\eta-c)^2}<\infty$, implying  $\be\in\hyperlink{fpt}{\F^{2}_{P,\vT}}$. \smallskip
	
Let  $\xi\in\hyperlink{mkd}{\mathfrak M_+(D)}$ be defined by means of $\xi(\theta):=\left(\frac{b_2}{b_1}\right)^{a}\cdot e^{-(b_2-b_1)\cdot\theta}$ for any  $\theta>0$,  where  $b_2$ is a positive constant such that $b_2<b_1$.  Clearly $\E_P[\xi(\vT)]=1$, implying that $\xi\in\hyperlink{r+d}{\mathcal R_+(D)}$. Applying standard computations we get  
	\[
	\E_P\left[\xi(\vT)\cdot\left(\frac{e^{\ln(\frac{\vT}{b}\cdot\E_P[W_1\mid\vT])}}{\E_P[W_1\mid\vT]}\right)^2\right]=\frac{\E_P[\xi(\vT)\cdot \vT^2]}{b^2} <\infty,
	\]
implying that  $\xi\in\hyperlink{r+da}{\mathcal R^{\ast,2}(D)}$.\smallskip

\noindent {\bf (a)}	Since $(\be,\xi)\in\hyperlink{fpt}{\F^2_{P,\vT}}\times\hyperlink{r+da}{\mathcal R^{\ast,2}(D)}$,  by Theorem \ref{class}  there exist a unique pair $(\rho,Q)\in\hyperlink{mkd}{\mathfrak M_+(D)}\times \hyperlink{msla}{\M^{\ast,2}_{S,{\bf Exp}(\Ttheta)}}$ determined by  conditions \eqref{ast} and \eqref{martPP} so that  conditions  \eqref{rndx}  and \eqref{rnd} are valid, an essentially unique rcp $\{Q_\theta\}_{\theta>0}$ of $Q$ over $Q_\vT$ consistent with $\vT$, and a $P_\vT$-null set $\wt L_{\ast\ast}\in\B(0,\infty)$ such that for any  $\theta\notin \wt L_{\ast\ast}$ conditions $Q_\theta\in\hyperlink{mstla}{{\M}^{\ast,2}_{S,{\bf Exp}(\ttheta)}}$, \eqref{rndx}, \eqref{*} and \eqref{rcp3} hold true.  It then follows by conditions \eqref{ast} and \eqref{rndx} that $\rho(\vT)=\frac{\vT}{b}$ $P{\uph}\sigma(\vT)$-a.s. and $Q_{X_1}={\bf Exp}(\eta-c)$, respectively, while condition \eqref{rnd} gives $Q_\vT={\bf Ga}(b_2,a)$; hence for any $\theta\notin \wt L_{\ast\ast}$  the probability measure $Q_\theta$ is a PCP satisfying condition $p(P_\theta)=\frac{\theta}{k\cdot\eta}<\frac{\theta}{b\cdot(\eta-c)}=p(Q_\theta)<\infty$. Thus, we may apply Remark \ref{dispd2a}   in order to conclude condition \eqref{pmpcp1},  and since for all $\theta\notin\wt{L}_{\ast\ast}$  the functions $\theta\mapsto\xi(\theta)$ and  $\theta\mapsto p(Q_\theta)$ are monotonic of the same monotonicity, we may apply Proposition \ref{mpcp}(ii) in order to conclude  condition \eqref{pmpcp2} with
\[
p(Q)=\E_{Q_\vT}[p(Q_\theta)]=\E_{Q_\vT}\Big[\frac{\theta}{b\cdot(\eta-c)}\Big]=\frac{a}{b_2\cdot b\cdot(\eta-c)}.
\]

\noindent    {\bf (b)}  Again by Theorem \ref{class} the probability measure $Q$ is a $2$-martingale measure for the process $V(\vT)$ with $V_t(\vT)=S_t-t\cdot\frac{\vT}{b\cdot(\eta-c)}$ for any $t\geq 0$,	and for any  $\theta\notin \wt L_{\ast\ast}$ the probability measure $Q_\theta$ is a $2$-martingale measure for the process $V(\theta)$ with $	V_t(\theta)=S_t-t\cdot\frac{\theta}{b\cdot(\eta-c)}$ for any  $t\geq 0$. In particular, for any $T>0$, Theorem \ref{ftap} 	asserts that both processes $V_{\mathbb T}(\vT)$ and $V_{\mathbb T}(\theta)$ satisfy condition (NFLVR).\smallskip 
	
\noindent {\bf (c)} 
Consider the reserve process $R^u(\vT):=u-V(\vT)$ ($u>0$). The equality $c(\vT)=p(Q,\vT)$, together with (a), implies that condition  \eqref{scnpc} is valid and that $Q\in\hyperlink{mslr}{\M^{ruin,2}_{S,{\bf Exp}(\rho(\vT))}}$; hence by Theorem \ref{ruin1a} we get that $Q$ is a $2$-martingale measure for the reserve process $R^u(\vT)$,   ruin occurs $Q$-a.s. and  condition \eqref{asq3} holds true. Condition \eqref{asq3} together with $S_{T_u(\vT)}^{(\ga)}= N_{T_u(\vT)}\cdot\ln(1-c\cdot\E_P[X_1])+c\cdot S_{T_u(\vT)}$,
 \begin{align*}
\prod_{j=1}^{N_{T_u(\vT)}}\frac{d {\bf K}(\vT)}{d{\bf Exp}(\Ttheta)}(W_j)=e^{N_{T_u(\vT)}\cdot \big(k\cdot \ln\vT-\ln\Gamma(k)-\ln\Ttheta\big)-{T_u(\vT)}\cdot\big(\vT-\Ttheta\big)+(k-1)\cdot\sum_{j=1}^{N_{T_u(\vT)}}\ln W_j}
 \end{align*}
and standard computations, yields
\[
\psi(u)=\E_Q\big[C_2\big(N_{T_u(\vT)}, W,X,\vT\big)\big]\cdot e^{-c\cdot u},
\]
where
\begin{align*}
C_2\big(N_{T_u(\vT)}, W,X,\vT\big):=\left(\frac{b_1}{b_2}\right)^{a}\cdot\Big(\prod_{j=1}^{N_{T_u(\vT)}}\frac{\eta\cdot\vT^k\cdot W_j^{k-1}}{(\eta-c)\cdot\Gamma(k)\cdot \Ttheta}\Big)   \cdot  e^{-(b_1-b_2)\cdot\vT+c\cdot R^u_{T_u(\vT)}(\vT)   +\vT\cdot{T_u(\vT)}\cdot\frac{(1-b)\cdot(\eta-c)-c}{b\cdot(\eta-c)}}.
\end{align*}
  
Moreover, again by Theorem \ref{ruin1a}, for any $\theta\notin\wt M_{\ast,Q}$, the reserve processes $R^u_{\mathbb T}(\vT):=\{R^u_t(\vT)\}_{t\in\mathbb T}$ and  $r^u_{\mathbb T}(\theta):=\{r^u_t(\theta)\}_{t\in\mathbb T}$ both satisfy the (NFLVR) property.
 \end{ex}

It follows a counter-example to show that, the assumption of the same monotonicity of the functions $\theta\mapsto{p}(Q_\theta)$ and $\theta\mapsto\xi(\theta)$ for all $\theta\notin\wt L_{\ast\ast}$ is essential for the validity of the conclusion $p(P)\leq{p}(Q)$ in Proposition \ref{mpcp}.

\begin{cex}
	\label{cou}
	\normalfont
	
In the situation of Example \ref{expcp2}, replace $\xi\in\hyperlink{mkd}{\mathfrak M_+(D)}$ with the function $\wt \xi\in\hyperlink{mkd}{\mathfrak M_+(D)}$   defined by $\wt \xi(\theta):=\left(\frac{\wt b_2}{b_1}\right)^{a}\cdot e^{-(\wt b_2-b_1)\cdot\theta}$ for any $\theta>0$, where $\wt b_2$ is a constant satisfying $\wt b_2> \frac{b_1\cdot k\cdot\eta}{b\cdot(\eta-c)}$. By the same way as in Example \ref{expcp2} we get $(\be,\wt\xi)\in\hyperlink{fpt}{\F^2_{P,\vT}}\times\hyperlink{r+da}{\mathcal R^{\ast,2}(D)}$, and so we may apply Theorem \ref{class} in order to obtain a unique pair $(\rho,\wt Q)\in\hyperlink{mkd}{\mathfrak M_+(D)}\times \hyperlink{msla}{\M^{*,2}_{S,{\bf Exp}(\Ttheta)}}$ determined by conditions \eqref{ast} and \eqref{martPP} such that conditions  \eqref{rndx} and \eqref{rnd} hold,  an essentially unique rcp $\{\wt Q_\theta\}_{\theta>0}$ of $\wt Q$ over $\wt Q_\vT$ consistent with $\vT$, and a $P_\vT$-null set $\wt L_{\ast\ast}\in\B(0,\infty)$ such that for any  $\theta\notin \wt L_{\ast\ast}$ conditions $\wt{Q}_\theta\in\hyperlink{mstla}{{\M}^{2}_{S,{\bf Exp}(\ttheta)}}$, \eqref{rndx}, \eqref{*} and \eqref{rcp3} hold true.  It then follows by conditions \eqref{ast} and \eqref{rndx} that $\rho(\vT)=\frac{\vT}{b}$ $P{\uph}\s(\vT)$-$\text{a.s.}$ and $\wt Q_{X_1}={\bf Exp}(\eta-c)$, respectively, while by condition \eqref{rnd} we get $\wt Q_\vT={\bf Ga}(\wt b_2,a)$; hence for any $\theta\notin \wt L_{\ast\ast}$  the probability measure $\wt Q_\theta$ is a PCP satisfying condition $p(P_\theta)=\frac{\theta}{k\cdot\eta}<\frac{\theta}{b\cdot(\eta-c)}=p(\wt Q_\theta)<\infty$; hence condition \eqref{pmpcp1} holds by  Remark \ref{dispd2a}.  Easy computations show that  	
\[
p(P)=\int_D p(P_\theta)\, P_\vT(d\theta)=\frac{a}{b_1\cdot k\cdot\eta} 
\]
and 
\[
p(\wt{Q})=\int_D p(\wt Q_\theta)\, Q_\vT(d\theta)=\frac{a}{\wt b_2\cdot b\cdot(\eta-c)}, 
\]
implying $p(P)>p(\wt{Q})$; hence the conclusions (i) and (ii)  of Proposition \ref{mpcp} fail. Note that all assumptions of  this proposition except for that of the same monotonicity of the functions $\theta\mapsto\xi(\theta)$ and $\theta\mapsto{p}(\wt{Q}_\theta)$ for all $\theta\notin\wt L_{\ast\ast}$  are satisfied, since the function $\theta\mapsto\xi(\theta)$ is strictly decreasing, while $\theta\mapsto{p}(\wt{Q}_\theta)$ is strictly increasing. As a consequence, we infer that the assumption of the same monotonicity of the functions $\theta\mapsto\xi(\theta)$ and $\theta\mapsto{p}(Q_\theta)$ is essential for the validity of the conclusion of Proposition \ref{mpcp}.\smallskip
	
 Note that even if $p(P_\theta)=p(\wt Q_\theta)$ for any $\theta\notin \wt L_{\ast\ast}$, i.e., whenever $c=0$ and $b=k$, the equality $p(P)=p(\wt Q)$ fails.
 \end{cex}

\end{document}